\renewcommand*{\bibnamedash}{%
	\leavevmode\raise +0.6ex\hbox to 5.5ex{\hrulefill}.\space\space}
\newenvironment{proposition}
{\pushQED{\qed}\propositionx}
{\popQED\endpropositionx}
\newenvironment{propositionp}
{\pushQED{\qed}\propositionx}
{\popQED\endpropositionx}
\newenvironment{theorem}
{\pushQED{\qed}\theoremx}
{\popQED\endtheoremx}
\newenvironment{lemma}
{\pushQED{\qed}\lemmax}
{\popQED\endlemmax}
\theoremstyle{remark}
\newenvironment{remark}
{\pushQED{\qed}\remarkx}
{\popQED\endremarkx}
\newenvironment{example}
{\pushQED{\qed}\examplex}
{\popQED\endexamplex}
\newcommand{\bbB}{\mathbb{B}}
\newcommand{\bbC}{\mathbb{C}}
\newcommand{\bbK}{\mathbb{K}}
\newcommand{\bbN}{\mathbb{N}}
\newcommand{\bbP}{\mathbb{P}}
\newcommand{\bbQ}{\mathbb{Q}}
\newcommand{\bbR}{\mathbb{R}}
\newcommand{\bbZ}{\mathbb{Z}}
\newcommand{\calB}{\mathcal{B}}
\newcommand{\calE}{\mathcal{E}}
\newcommand{\calF}{\mathcal{F}}
\newcommand{\calN}{\mathcal{N}}
\newcommand{\calP}{\mathcal{P}}
\newcommand{\calS}{\mathcal{S}}
\newcommand{\calT}{\mathcal{T}}
\newcommand{\calX}{\mathcal{X}}
\newcommand{\scrD}{\mathscr{D}}
\newcommand{\scrF}{\mathscr{F}}
\newcommand{\scrX}{\mathscr{X}}
\newcommand{\scrY}{\mathscr{Y}}
\newcommand{\frakN}{\mathfrak{N}}
\newcommand{\dd}{\,\mathrm{d}}
\title{A strengthened Orlicz--Pettis theorem via It\^o--Nisio}
\author{Ethan Sussman}
\date{October 20th, 2022 (Last update). July 1st, 2021 (Preprint).}
\email{ethanws@mit.edu}
\address{Department of Mathematics, Massachusetts Institute of Technology, Massachusetts 02139-4307, USA}
\keywords{It\^o--Nisio, Orlicz--Pettis, Gaussian noise}
\subjclass[2020]{46B09, 60B05}
\begin{document}
	
\begin{abstract}
	In this note we deduce a strengthening of the Orlicz--Pettis theorem from the It\^o--Nisio theorem. The argument shows that given any series in a Banach space which isn't summable (or more generally unconditionally summable), we can \emph{construct} a (coarse-grained) subseries with the property that -- under some appropriate notion of ``almost all'' -- almost all further subseries thereof fail to be weakly summable. Moreover, a strengthening of the It\^o--Nisio theorem by Hoffmann-J{\o}rgensen allows us to replace `weakly summable' with `$\tau$-weakly summable' for appropriate topologies $\tau$ weaker than the weak topology. A treatment of the It\^o--Nisio theorem for admissible $\tau$ is given. 
\end{abstract}

\maketitle

\tableofcontents

\section{Introduction}

Let $\scrX$ denote a Banach space over $\bbK \in \{\bbR,\bbC\}$.
Call a subset $\tau \subseteq2^{\scrX}$ an \emph{admissible topology} on $\scrX$ if 
\begin{enumerate}
	\item it is an LCTVS\footnote{By `LCTVS' we mean a \emph{Hausdorff} locally convex topological vector space, so we follow the conventions in \cite{Rudin}.}-topology on $\scrX$ identical to or weaker than the norm (a.k.a.\ strong) topology under which the norm-closed unit ball $\bbB =\{x\in \scrX: \lVert x \rVert\leq 1\}$ is $\tau$-closed, and
	\label{it:admissible_main}
	\item if $\scrX$ is \emph{not} separable, then $\tau$ is at least as strong as the weak topology.
	\label{it:admissible_two}
\end{enumerate}
Cf.\ \cite{Hoffmann1974}, from which the separable case of this definition arises.  By the Hahn-Banach separation theorem, if $\tau$ is an admissible topology then the $\tau$-weak topology (a.k.a.\ $\sigma(\scrX,\scrX_\tau^*)$-topology) is also admissible (see \Cref{lem:weak}). 

Besides the norm topology itself, which is trivially admissible (and uninteresting below), the most familiar example of an admissible topology on $\scrX$ is the weak topology.
Many others arise in functional analysis. For example, given a compact Riemannian manifold $M$, for most function spaces $\scrF$ it is the case that the $\sigma(\scrF,C^\infty(M))$-topology (a.k.a.\ the topology of distributional convergence) is admissible. An even weaker typically admissible topology is that on $\scrF$ generated by the functionals $\langle - ,\varphi_n \rangle:\scrD'(M)\to \bbC$ for $\varphi_0,\varphi_1,\varphi_2,\cdots$ the eigenfunctions of the Laplacian. 

Denote by $\smash{\scrX^{\bbN}}$ the vector space of all $\scrX$-valued sequences $\{x_n\}_{n=0}^\infty \subseteq\scrX$. In the usual way, we identify such sequences with $\scrX$-valued formal series (and denote accordingly). 
We say that a formal series $\sum_{n=0}^\infty x_n \in \scrX^\bbN$ is ``$\tau$-summable'' if $\smash{\sum_{n=0}^N }x_n \in \scrX$ converges as $N\to\infty$ in $\scrX_\tau$. 

Consider the following (slightly generalized) version of the  Orlicz--Pettis theorem  \cite{Orlicz1929}:
\begin{theorem} \label{thm:OP}
	Suppose that $\tau$ is an admissible topology on $\scrX$.
	 If $\sum_{n=0}^\infty x_n \in \scrX^\bbN$ fails to be unconditionally summable in the norm topology, then 
	 \begin{itemize}
	 	\item there exist some $\epsilon_0,\epsilon_1,\epsilon_2,\cdots \in \{-1,+1\}$
	 	such that the sequence	$\Sigma(\{\epsilon_n\}_{n=0}^\infty)=\{\Sigma_N\}_{N=0}^\infty$ defined by 
	 	\begin{equation}
	 		\Sigma_N = \sum_{n=0}^N \epsilon_n x_n
	 		\label{eq:Sigma}
	 	\end{equation}
 		does not $\tau$-converge as $N\to\infty$ to any element of $\scrX$, and
 		\item there exist some $\chi_0,\chi_1,\chi_2,\cdots \in \{0,1\}$ 
 		such that the sequence	$S(\{\epsilon_n\}_{n=0}^\infty)=\{S_N\}_{N=0}^\infty$ defined by 
 		\begin{equation}
 			S_N = \sum_{n=0}^N \chi_n x_n, 
 			\label{eq:S}
 		\end{equation}
 		does not $\tau$-converge as $N\to\infty$ to any element of $\scrX$. 
	 \end{itemize}
 	In particular, this applies if $\sum_{n=0}^\infty x_n$ is not summable in the norm topology.
\end{theorem}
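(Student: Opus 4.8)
The plan is to randomize over signs and invoke the It\^o--Nisio theorem, but only after a coarse-graining step that makes the randomized series fail to converge in norm \emph{robustly} --- for every realization of the signs, not just almost surely. It\^o--Nisio then upgrades this manifest norm-divergence to $\tau$-divergence with full probability, and a zero--one law turns ``not almost surely convergent'' into ``almost surely divergent.'' A preliminary reduction lets me assume separability. All partial sums in \eqref{eq:Sigma} and \eqref{eq:S} lie in the separable closed subspace $\scrY := \overline{\operatorname{span}}\{x_n : n \in \bbN\}$. If $\scrX$ is separable I apply the It\^o--Nisio theorem for admissible $\tau$ (established in the preceding section) directly in $\scrX$. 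If $\scrX$ is not separable, the second admissibility requirement forces $\tau$ to dominate the weak topology; then any $\tau$-convergent sequence drawn from $\scrY$ converges weakly, so its limit lies in the (weakly closed) subspace $\scrY$, and $\tau$-convergence in $\scrX$ to an element of $\scrX$ coincides with $\tau|_{\scrY}$-convergence in $\scrY$. Since $\bbB \cap \scrY$ is the $\tau|_{\scrY}$-closed unit ball of $\scrY$ and $\tau|_{\scrY}$ still dominates the weak topology of $\scrY$, the restriction $\tau|_{\scrY}$ is admissible on the separable space $\scrY$, and I work there.

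For the first bullet I would start from the classical characterization that $\sum_n x_n$ is unconditionally norm-summable if and only if $\sum_n \epsilon_n x_n$ converges for \emph{every} sign sequence $(\epsilon_n) \in \{-1,+1\}^{\bbN}$. The hypothesis therefore supplies one sequence $(\epsilon_n^0)$ for which $\Sigma^0_N = \sum_{n=0}^N \epsilon_n^0 x_n$ is not norm-Cauchy: there is an $\eta > 0$ such that for every $N$ some $M' > M \geq N$ satisfies $\lVert \Sigma^0_{M'} - \Sigma^0_M \rVert \geq \eta$. A greedy construction then partitions $\bbN$ into consecutive finite blocks $I_k = (M_{k-1}, M_k]$ with $\lVert w_k \rVert \geq \eta/2$, where $w_k = \Sigma^0_{M_k} - \Sigma^0_{M_{k-1}} = \sum_{n \in I_k} \epsilon_n^0 x_n$; indeed, given $M_{k-1}$ the triangle inequality places an increment of size at least $\eta/2$ with left endpoint $M_{k-1}$. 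Now let $(\delta_k)$ be i.i.d.\ Rademacher variables, so the $\delta_k w_k$ are independent, symmetric, and $\scrY$-valued. Because $\lVert w_k \rVert \geq \eta/2 \not\to 0$, the consecutive differences of the partial sums of $\sum_k \delta_k w_k$ have norm at least $\eta/2$ for \emph{every} realization, so this series never converges in norm. By It\^o--Nisio, almost sure $\tau$-convergence would entail almost sure norm-convergence, which is impossible; hence the probability of $\tau$-convergence is not one, and as $\tau$-convergence of a series is a tail event, Kolmogorov's zero--one law makes it zero. Fixing any $\delta$ in this full-measure set and setting $\epsilon_n = \delta_k \epsilon_n^0$ for $n \in I_k$, the block-boundary partial sums $\Sigma_{M_k}$ differ from $\sum_{j \leq k} \delta_j w_j$ by a constant and so $\tau$-diverge; a fortiori the full sequence $\Sigma_N$ does not $\tau$-converge, which proves the first assertion (in fact for almost every $\delta$).

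The second bullet then follows by reusing these signs together with a dichotomy on the ordinary partial sums $T_N = \sum_{n=0}^N x_n$. If $T_N$ does $\tau$-converge, put $\chi_n = (1 + \epsilon_n)/2 \in \{0,1\}$, so that $S_N = \tfrac12(T_N + \Sigma_N)$; were $S_N$ to $\tau$-converge, then $\Sigma_N = 2 S_N - T_N$ would too, contradicting the first bullet. If instead $T_N$ fails to $\tau$-converge, take $\chi_n \equiv 1$, so $S_N = T_N$ diverges outright. In either case the required selectors exist. Finally, the ``in particular'' clause is immediate, since unconditional norm-summability implies norm-summability, so failure of norm-summability is a special case of the standing hypothesis.

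I expect the main obstacle to be the It\^o--Nisio input itself: it is precisely what licenses the passage from the transparent norm-divergence of the coarse-grained randomized series to its $\tau$-divergence, and everything of substance in the argument is channeled through it. The remaining delicacy is the bookkeeping of the coarse-graining and its transfer back to the original index set --- arranging the blocks to be consecutive so that the pulled-back signs are genuine $\pm 1$ sequences on the $x_n$, and confirming that a divergent block-boundary subsequence precludes convergence of the whole. By comparison the separable reduction (with the verification that $\tau|_{\scrY}$ stays admissible and that $\tau$-limits of $\scrY$-valued sequences remain in $\scrY$) is routine but necessary, and the measurability of the tail event underlying the zero--one law is subsumed by the framework in which the It\^o--Nisio theorem is phrased.
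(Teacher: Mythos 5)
Your proposal is correct and takes essentially the same route as the paper: coarse-grain the series into finite blocks witnessing the failure of unconditional summability, put i.i.d.\ Rademacher signs on the blocks, and use It\^o--Nisio to convert the sure failure of norm convergence into failure of almost-sure $\tau$-convergence and then pull the signs back --- exactly the mechanism of \Cref{thm:PO} and \Cref{prop:POP}. The deviations are cosmetic (your blocks are consecutive intervals extracted from a single divergent signed series rather than the paper's disjoint finite sets fed into a coarse-graining map $f$, and your $\{0,1\}$ bullet uses the deterministic identity \cref{eq:misc_j22} where the paper invokes \Cref{prop:eq}), with one point of precision: the zero--one law must be applied to the Borel tail event of $\calS$-weak summability ($\mathtt{P}_{\mathrm{II}'}$, as in \Cref{prop:kol}), not to $\tau$-convergence itself, whose measurability the paper deliberately leaves unaddressed; this is exactly what the paper's framework supplies, and in any case \Cref{thm:OP} needs only one divergent realization, which ``not almost surely $\tau$-summable'' already yields without any zero--one law.
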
 
\begin{remark}	\label{rem:eq}
	From the formulas 
	\begin{align}
		\Sigma_N(\{\epsilon_n\}_{n=0}^N) &= S_N(\{2^{-1}(1+\epsilon_n)\}_{n=0}^N) -S_N(\{2^{-1}(1-\epsilon_n)\}_{n=0}^N) 
		\label{eq:misc_j21}\\
		S_N(\{\chi_n\}_{n=0}^N) &= 2^{-1} \Sigma_N(\{ 1\}_{n=0}^N) + 2^{-1} \Sigma_N(\{ 2\chi_n-1\}_{n=0}^N), 
		\label{eq:misc_j22}
	\end{align}
	we deduce that 
	$\Sigma(\{\epsilon_n\}_{n=0}^\infty)$ is $\tau$-convergent for all $\{\epsilon_n\}_{n=0}^\infty \in \{-1,+1\}^\bbN$ if and only if $S(\{\chi_n\}_{n=0}^\infty)$ is $\tau$-convergent for all $\{\chi_n\}_{n=0}^\infty \in \{0,1\}^\bbN$. We will phrase the discussion below in terms of whichever of $\Sigma(-),S(-)$ is convenient, but this equivalence should be kept in mind. 
	
	See \Cref{prop:eq} for the probabilistic version of this remark.
\end{remark}

\begin{example}
	Let $M$ be a compact Riemannian manifold and $\scrF\subseteq\scrD'(M)$ be a function space on $M$. Let $\tau$ denote the topology generated by the functionals $\langle - ,\varphi_n \rangle_{L^2(M)}$, where $\varphi_0,\varphi_1,\varphi_2,\cdots$ denote the eigenfunctions of the Laplace-Beltrami operator. 
	Suppose that $\tau$ is admissible. This holds, for example, if $\scrF$ is an $L^p$-based Sobolev space for $p\in [1,\infty)$.
	
	Then, for any $\{x_n\}_{n=0}^\infty \subseteq\scrF$, the formal series $\sum_{n=0}^\infty x_n$ is unconditionally summable in $\scrF$ (in norm) if and only if 
	\begin{equation}
		\sum_{n=0}^\infty | \langle x_n ,\varphi_m \rangle | < \infty 
	\end{equation}
	for all $m\in \bbN$ and, for all $\{\chi_n\}_{n=0}^\infty \subseteq\{0,1\}$, there exists an element $S(\{\chi_n\}_{n=0}^\infty)\in \scrF$ whose $m$th Fourier coefficient is given by 
	\begin{equation}
		\langle S(\{\chi_n\}_{n=0}^\infty), \varphi_m\rangle=\sum_{n=0}^\infty \chi_n \langle x_n ,\varphi_m \rangle.
	\end{equation} 
\end{example}

We focus on Banach spaces -- as opposed to more general LCTVSs -- for simplicity. Most of the considerations below apply equally well to Fr\'echet spaces. There is a long history of variants of the Orlicz--Pettis theorem for various sorts of TVSs \cite{Dierolf}. A short proof of the Orlicz--Pettis theorem for Banach spaces can be found in \cite{Bessaga1958}, and a textbook presentation can be found in \cite{Meg}. The proof below has much in common with a probabilistic proof \cite{DiestelMain} based on the Bochner integral (due to Kwapie\'n). 

The proof below is nonconstructive, in the following sense: upon being given a formal series $\sum_{n=0}^\infty x_n \in \scrX^\bbN$ which fails to be unconditionally summable, we do not construct any particular sequence $\{\epsilon_n\}_{n=0}^\infty \subseteq\{-1,+1\}$ such that $\Sigma(\{\epsilon_n\}_{n=0}^\infty) \subseteq\scrX$ fails to converge in $\scrX_\tau$, or any particular $\{\chi_n\}_{n=0}^\infty \subseteq\{0,1\}$ such that $S(\{\chi_n\}_{n=0}^\infty)\subseteq\scrX$ fails to converge in $\scrX_\tau$. 
All proofs of the Orlicz--Pettis theorem seem to be nonconstructive in this regard.
We do, however, construct a function 
\begin{equation} 
	\calE : \{ \{x_n\}_{n=0}^\infty \in \scrX^\bbN \text{ not unconditionally summable}  \} \to 2^{\{-1,+1\}^\bbN },
	\label{eq:coarse_graining}
\end{equation} 
such that, when $\{x_n\}_{n=0}^\infty$ is not unconditionally summable, $\Sigma(\{\epsilon_n\}_{n=0}^\infty)$ and $S(\{2^{-1}(1-\epsilon_n)\}_{n=0}^\infty)$ both fail to be $\tau$-summable for $\bbP_{\mathrm{Coarse}}$-almost all sequences $\{\epsilon_n\}_{n=0}^\infty \in \calE$, where
\begin{equation}
	\bbP_{\mathrm{Coarse}} : \operatorname{Borel}(\{-1,+1\}^\bbN)|_{\calE(\{x_n\}_{n=0}^\infty)} \to [0,1] 
	\label{eq:sampling-procedure}
\end{equation}
is a probability measure on the subspace $\sigma$-algebra 
\begin{equation} 
	\operatorname{Borel}(\{-1,+1\}^\bbN)|_{\calE(\{x_n\}_{n=0}^\infty)} = \{S\cap\calE(\{x_n\}_{n=0}^\infty) : S\in  \operatorname{Borel}(\{-1,+1\}^\bbN)\}.
\end{equation}
So, while the proof is nonconstructive, it is only just. 
Put more colorfully, the proof follows the ``hay in a haystack'' philosophy familiar from applications of the probabilistic method to combinatorics \cite{AlonSpencer}: using an appropriate sampling procedure, we choose a random subseries and show that -- with ``high probability'' (which in this case means probability one) -- it  has the desired property. 

Precisely, letting $\bbP_{\mathrm{Haar}}$ denote the Haar measure on the Cantor group $\{-1,+1\}^{\bbN}\cong \bbZ_2^\bbN$ \cite{DiestelMain} (which is a compact topological group under the product topology, by Tychonoff's theorem):
\begin{theorem}[Probabilist's Orlicz--Pettis Theorem] \label{thm:PO}
	Suppose that $f:\bbN\to \bbN$ is a function such that $|f^{-1}(\{n\})|<\infty$ for all $n\in \bbN$. If $\calT\subseteq\bbN$ is infinite and satisfies
	\begin{equation}
		\limsup_{n\to\infty, n\in \calT} \Big\rVert \sum_{ n_0 \in f^{-1}(\{n\}) } x_{n_0} \Big\rVert >0, 
		\label{eq:misc_g41}
	\end{equation}
	then it is the case that, for $\bbP_{\mathrm{Haar}}$-almost all $\{\epsilon_n\}_{n=0}^\infty \in \{-1,+1\}^\bbN$, the formal series 
	\begin{equation}
		\sum_{n=0,f(n)\in \calT}^\infty \epsilon_{f(n)} x_n \in \scrX^\bbN, \qquad \sum_{n=0,f(n)\in \calT}^\infty \frac{1}{2}(1-\epsilon_{f(n)}) x_n  \in \scrX^\bbN
		\label{eq:misc_h31}
	\end{equation}
	both fail to be $\tau$-summable.
\end{theorem}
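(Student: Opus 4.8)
The plan is to push both assertions through the block structure defined by $f$ and then invoke the It\^o--Nisio theorem. Write $y_m := \sum_{n_0 \in f^{-1}(\{m\})} x_{n_0}$, which is well defined since each fiber is finite; hypothesis \eqref{eq:misc_g41} says exactly that there is a constant $c>0$ with $\lVert y_m\rVert > c$ for infinitely many $m \in \calT$. The crucial structural observation is that $\epsilon_{f(n)}$ is \emph{constant on each block} $f^{-1}(\{m\})$, so when the terms of the first series in \eqref{eq:misc_h31} are grouped by block, its completed-block partial sums are $T_M := \sum_{m \in \calT,\, m\le M} \epsilon_m y_m$. Under $\bbP_{\mathrm{Haar}}$ the coordinates $\{\epsilon_m\}_{m\in\calT}$ are i.i.d.\ uniform on $\{-1,+1\}$, so $\{\epsilon_m y_m\}_{m\in\calT}$ is a sequence of \emph{independent, symmetric} $\scrX$-valued random variables. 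The whole problem thus reduces to showing that $\sum_{m \in \calT}\epsilon_m y_m$ fails to $\tau$-converge almost surely.

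For that reduced statement the It\^o--Nisio theorem is decisive. Convergence of a series is a tail event, so by Kolmogorov's $0$--$1$ law the event that $\sum_{m\in\calT}\epsilon_m y_m$ is $\tau$-summable has probability $0$ or $1$; assume for contradiction it has probability $1$. Because the summands $\epsilon_m y_m$ are independent and symmetric, the It\^o--Nisio theorem for the admissible topology $\tau$ identifies almost sure $\tau$-convergence with almost sure \emph{norm} convergence. The latter forces $\lVert \epsilon_m y_m\rVert = \lVert y_m\rVert \to 0$, contradicting \eqref{eq:misc_g41}. Hence $\sum_{m\in\calT}\epsilon_m y_m$ $\tau$-diverges almost surely.

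The step I expect to be the main obstacle is matching this to the \emph{index-ordered} notion of $\tau$-summability in the statement: since distinct blocks may interleave in $n$, the block sums $T_M$ need not form a subsequence of the genuine partial sums $P_N := \sum_{n\le N,\, f(n)\in\calT}\epsilon_{f(n)} x_n$. To bridge this I would work along the cofinal indices $R_M := \max_{m\in\calT,\, m\le M}\max f^{-1}(\{m\})$ and decompose $P_{R_M} = T_M + W_M$, where $W_M$ gathers the partial contributions of the blocks $m'>M$ already begun by index $R_M$. The point is that $T_M$ and $W_M$ depend on disjoint sets of signs, so the summands making up $P_{R_M}$ are still independent and symmetric; were $P_N$ to $\tau$-converge almost surely, the family $\{P_{R_M}\}$ would be $\tau$-tight, and a L\'evy-type maximal inequality (valid for symmetric summands and balanced $\tau$-neighborhoods, which is where admissibility of $\tau$—the $\tau$-closedness of $\bbB$—enters) would transfer this $\tau$-tightness to $\{T_M\}$. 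The It\^o--Nisio theorem then upgrades it to almost sure norm convergence of $T_M$, and we reach the same contradiction $\lVert y_m\rVert \to 0$. This delivers the first series.

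For the second series in \eqref{eq:misc_h31}, whose block sums are $\sum_{m\in\calT}\eta_m y_m$ with $\eta_m := \tfrac12(1-\epsilon_m) \in \{0,1\}$ i.i.d.\ Bernoulli, the summands are no longer symmetric, so I would symmetrize. If this series $\tau$-converged almost surely, then so would an independent copy $\sum_{m\in\calT}\eta_m' y_m$, hence their difference $\sum_{m\in\calT}(\eta_m-\eta_m')y_m = \tfrac12\sum_{m\in\calT}(\epsilon_m'-\epsilon_m)y_m$, which \emph{is} a series of independent symmetric summands. It\^o--Nisio again yields almost sure norm convergence and thus $(\eta_m-\eta_m')y_m \to 0$ almost surely; but $\bbP(\eta_m\neq\eta_m') = \tfrac12$ independently in $m$, so along the infinitely many $m$ with $\lVert y_m\rVert>c$ the second Borel--Cantelli lemma forces $\lVert(\eta_m-\eta_m')y_m\rVert>c$ infinitely often almost surely—a contradiction. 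The same $0$--$1$ law then gives almost sure $\tau$-divergence of the second series. (This symmetrization is the probabilistic counterpart of the algebraic identities \eqref{eq:misc_j21}--\eqref{eq:misc_j22}; cf.\ \Cref{prop:eq}.)
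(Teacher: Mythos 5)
Your setup (block sums $y_m$, i.i.d.\ block-level signs, It\^o--Nisio to convert $\tau$-failure into norm-failure, symmetrization with an independent copy for the Bernoulli series) matches the paper's strategy, and you correctly isolate the one real difficulty: the blocks of $f$ may interleave, so the completed-block sums $T_M$ need not be a subsequence of the true partial sums $P_N$. The gap is in the bridge you build across that difficulty. Your chain is: a.s.\ $\tau$-convergence of $P_N$ $\Rightarrow$ $\tau$-tightness of $\{P_{R_M}\}$ $\Rightarrow$ (L\'evy-type inequality) $\tau$-tightness of $\{T_M\}$ $\Rightarrow$ (``It\^o--Nisio'') a.s.\ norm convergence of $T_M$. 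The final implication is not the It\^o--Nisio theorem available here: \Cref{thm:IN} takes as input \emph{a.s.\ $\tau$-convergence} and outputs a.s.\ norm convergence; it says nothing about upgrading \emph{tightness} of partial sums to convergence, and since $T_M$ has no candidate limit (producing one is the whole point), you also cannot run the argument of \Cref{prop:IN}, which presupposes an a.s.\ $\tau$-limit $\Sigma_\infty$. ``Tightness of the partial sums of an independent symmetric series implies a.s.\ convergence'' is a strictly stronger theorem (the hard half of the full It\^o--Nisio/L\'evy theory, proved via characteristic functionals and Prokhorov compactness), and with tightness measured by \emph{$\tau$-compact} sets it is deeper still: if $\scrX$ is reflexive and $\tau$ is the weak topology, then closed balls are $\tau$-compact, so $\tau$-tightness amounts to stochastic boundedness, and ``boundedness $\Rightarrow$ a.s.\ convergence'' is Kwapie\'n's theorem (tied to $c_0\not\hookrightarrow\scrX$), not It\^o--Nisio. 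So the crucial step of your proof appeals to an unproven, and in this paper unavailable, result.

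The repair is simpler than the bridge, and it is what the paper does. Since every fiber $f^{-1}(\{m\})$ is finite and \cref{eq:misc_g41} supplies infinitely many blocks with $\lVert y_m \rVert$ bounded below, you can extract an infinite $\calT_0\subseteq\calT$ such that $\inf_{m\in\calT_0}\lVert y_m\rVert>0$ \emph{and} $f$ is monotone on $f^{-1}(\calT_0)$, i.e.\ the $\calT_0$-blocks do not interleave. Then use the sign-flip trick (the second proposition of \S\ref{sec:main}): flipping the signs $\epsilon_m$ for $m\in\calT_0$ preserves the law of the random formal series, so if the full series were a.s.\ summable, differencing the original and the flipped series shows that the index-ordered sub-series over $\{n:f(n)\in\calT_0\}$ is a.s.\ summable too; for that sub-series the completed-block sums \emph{are} a subsequence of the partial sums, and your It\^o--Nisio-plus-norms-don't-vanish contradiction (this is \Cref{prop:POP}) applies verbatim. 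One secondary point: you invoke Kolmogorov's $0$--$1$ law for ``the event that the series is $\tau$-summable'' without checking that this event is measurable; for a general admissible $\tau$ it need not be (the paper explicitly declines to address when $\mathtt{P}_{\mathrm{II}}$ is Borel). The fix, as in \Cref{prop:POP}, is to work with $\calS$-weak summability for a countable norming family $\calS$ (the set $\mathtt{P}_{\mathrm{II}'}$ of \Cref{lem:meas}), a measurable tail event whose a.s.\ failure implies a.s.\ failure of $\tau$-summability since $\mathtt{P}_{\mathrm{II}}\subseteq\mathtt{P}_{\mathrm{II}'}$.
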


The relation to Orlicz--Pettis is as follows. If $\sum_{n=0}^\infty x_n \in \scrX^\bbN$ is not unconditionally summable, then we can find some pairwise disjoint, finite subsets $\calN_0,\calN_1,\calN_2,\cdots  \subseteq\bbN$ such that 
\begin{equation}
	\inf_{N\in \bbN} \Big\lVert \sum_{n\in \calN_N} x_n \Big\rVert  > 0.
\end{equation}
We can then choose some $f:\bbN\to\bbN$ such that $f(n)=f(m)$ if and only if either $n=m$ or $n,m\in \calN_N$ for some $N\in \bbN$. Thus, if we set $\calT=\bbN$, \cref{eq:misc_g41} holds. Appealing to \Cref{thm:PO}, we conclude that, for $\bbP_{\mathrm{Haar}}$-almost all $\{\epsilon_n\}_{n=0}^\infty$, 
the formal series 
\begin{equation}
	\sum_{n=0}^\infty \epsilon_{f(n)} x_n \in \scrX^\bbN, \qquad \sum_{n=0}^\infty \frac{1}{2}(1-\epsilon_{f(n)}) x_n  \in \scrX^\bbN
\end{equation}
both fail to be $\tau$-summable.
\Cref{thm:OP}, therefore, follows from \Cref{thm:PO}. 
The connection with \cref{eq:coarse_graining}, \cref{eq:sampling-procedure} is that we can choose $f$ such that $\calE$ is the set of $\{\epsilon_n\}_{n=0}^\infty \in \{-1,+1\}^\bbN$ such that $\epsilon_n=\epsilon_m$ whenever $f(n)=f(m)$, and $\bbP_{\mathrm{Coarse}}$ is $\bbP_{\mathrm{Haar}}$ conditioned on the event that $\{\epsilon_n\}_{n=0}^\infty \in \calE$.

\begin{remark}
	The Haar measure on the Cantor group is the unique measure on $\operatorname{Borel}(\{-1,+1\}^\bbN) = \sigma(\{\epsilon_n\}_{n=0}^\infty)$ such that if we define $\epsilon_n: \{-1,+1\}^\bbN \to \{-1,+1\}$ by $\epsilon_n:\{\epsilon'_m\}_{m=0}^\infty \mapsto \epsilon'_n$, the random variables
	$\epsilon_0,\epsilon_1,\epsilon_2,\cdots$
	are i.i.d.\ Rademacher random variables.
\end{remark}
\begin{remark}
	It suffices to prove the theorems above when $\scrX$ is separable. Indeed, if $\scrX$ is not separable and $\scrY$ denotes the norm-closure of the span of $x_0,x_1,x_2,\cdots \in \scrX$, then, for any $\{\lambda_n\}_{n=0}^\infty \subseteq\bbK$, 
	\begin{equation}
		\tau\!-\!\!\lim_{N\to\infty} \sum_{n=0}^N \lambda_n x_n  
		\label{eq:misc_ttt}
	\end{equation}
	exists in $\scrX$ if and only if it exists in $\scrY$. (This is a consequence of the requirement that $\tau$ be at least as strong as the weak topology, so the limit in \cref{eq:misc_ttt} is also a weak limit. Norm-closed convex subsets of $\scrX$ are weakly closed by Hahn-Banach, so this implies that $\scrY$ is $\tau$-closed.)
	
	The subspace topology on $\scrY\hookrightarrow \scrX_\tau$ is admissible, and $\scrY$ is separable, so we can deduce \Cref{thm:OP} and \Cref{thm:PO} for $\scrX$ from the same theorems for $\scrY$. 
\end{remark}
\begin{remark}
	\label{rem:sep}
	If $\scrX$ is not separable and $\tau$ not at least as strong as the weak topology, then the conclusions of these theorems may fail to hold, even if the norm-closed balls in $\scrX$ are $\tau$-closed. 
	As a simple counterexample, let $\scrX=L^\infty [0,1]$, and let $\tau$ be the $\sigma(L^\infty,L^1)$-topology. 
	This being a weak-$*$ topology, the norm-closed balls are $\tau$-closed (and even $\tau$-compact).
	Let 
	\begin{equation}
		\Sigma_N(t) = t^N,
	\end{equation}
	$x_n(t) = \Sigma_{n}(t) -\Sigma_{n-1}(t)$ for $n\geq 1$, $x_0(t) = \Sigma_0(t)$. Then, the series $\sum_{n=0}^\infty x_n$ is $\tau$-subseries summable, being $\tau$-summable to the identically zero function. But, $\Sigma_N$ does not converge to zero uniformly, so $\sum_{n=0}^\infty x_n$ is not strongly summable.
\end{remark}
\begin{remark}	\label{rem:count}
	When $\scrX$ is separable, it suffices to consider the case when $\tau$ is the topology generated by a countable norming set of functionals. Recall that a subset $\calS\subseteq\scrX_\tau^*$ is called \emph{norming} if 
	\begin{equation}
		\lVert x \rVert = \sup_{\Lambda \in \calS} |\Lambda x |
	\end{equation}
	for all $x\in \scrX$. We can scale the members of a norming subset to get another norming subset whose members $\Lambda$ satisfy $\smash{\lVert \Lambda \rVert_{\scrX^*}}=1$, and this generates the same topology. If $\tau$ is admissible, then (by the Hahn-Banach theorem and separability) there exists a countable norming subset $\calS \subseteq\smash{\scrX_\tau^*}$ (see \Cref{lem:countable_norming}). 
	Whenever $\calS\subseteq\scrX^*_\tau$ is a countable norming subset, the $\sigma(\scrX,\calS)$-topology is admissible as well (see \Cref{lem:S_admissible}), and identical with or weaker than $\tau$. 
\end{remark}

It is not necessary to consider probability spaces other than 
\begin{equation}
	(\{-1,+1\}^\bbN,\operatorname{Borel}(\{-1,+1\}^\bbN), \bbP_{\mathrm{Haar}}),
\end{equation} 
but it will be convenient to have a bit more freedom.
Let $(\Omega,\calF,\bbP)$ denote a probability space on which i.i.d.\ Bernoulli random variables 
\begin{equation} 
	\chi_0,\chi_1,\chi_2,\cdots : \Omega\to \{0,1\} 
\end{equation} 
are defined. For example, 
\begin{equation} 
	(\Omega,\calF,\bbP) = (\{-1,+1\}^\bbN,\operatorname{Borel}(\{-1,+1\}^{\bbN}),\bbP_{\mathrm{Haar}}),
\end{equation}  
in which case we set $\chi_n=(1/2)(1-\epsilon_n)$.
Given this setup and given a formal series $\sum_{n=0}^\infty x_n \in \scrX^\bbN$, we can construct a random formal subseries $S: \Omega\to \scrX^\bbN$ by
\begin{equation}
	S(\omega) = \sum_{n=0}^\infty \chi_n(\omega) x_n. 
	\label{eq:Si}
\end{equation}
This is a measurable function from $\Omega$ to $\scrX^{\bbN}$ when $\scrX$ is separable (see \Cref{lem:meas_0})

Suppose that $\scrX$ is separable. 
Given any Borel subset $\mathtt{P} \subseteq\scrX^\bbN$ the probability $\mathbb{P}(S^{-1}(\mathtt{P})) \in [0,1]$ of the ``event'' $S \in \mathtt{P}$ is well-defined. 
Given some ``property'' $\mathtt{P}$ -- which we identify with a not-necessarily-Borel subset $\mathtt{P}\subseteq \scrX^\bbN$ -- that a formal series may or may not possess, to say that almost all subseries of $\sum_{n=0}^\infty x_n$ have property $\mathtt{P}$ 
means that there exists some $F\in \calF$ with 
\begin{equation} 
	\bbP(F) = 1
	\label{eq:fm}
\end{equation} 
and $\omega \in F \Rightarrow S(\omega) \in \mathtt{P}$. 
In this case, we say that $S$ has the property $\mathtt{P}$ for $\bbP$-almost all $\omega$. (Note that we do not require $S^{-1}(\mathtt{P}) \in \calF$, although this is automatic if $\mathtt{P}$ is Borel, and can be arranged by passing to the completion of $\mathbb{P}$.)
Analogous locutions will be used for random formal series generally.
If $\mathtt{P}$ is Borel then $S(\omega)$ will have the property $\mathtt{P}$ for $\bbP$-almost all $\omega \in \Omega$ if and only if $\bbP(S^{-1}(\mathtt{P}))=1$.

In order to prove the theorems above, we use the following variant of a theorem of It{\^o} and Nisio \cite{ItoNisio} refined by Hoffmann-J{\o}rgensen \cite{Hoffmann1974}:
\begin{theorem} \label{thm:IN} 
	Suppose that $\tau$ is an admissible topology on $\scrX$. 
	Let 
	\begin{equation}
		\gamma_{0},\gamma_1,\gamma_2,\cdots : \Omega\to \{-1,+1\}
	\end{equation}
	be independent, symmetric random variables on $(\Omega,\calF,\bbP)$. If $\scrX$ is a Banach space and $\{x_n\}_{n=0}^\infty \in \scrX^\bbN$, the following are equivalent:
	\begin{enumerate}[label=(\Roman*)]
		\item for $\bbP$-almost all $\omega\in \Omega$, $\sum_{n=0}^\infty \gamma_n(\omega) x_n$ is summable in $\scrX$, 
		\label{it:IN1}
		\item for $\bbP$-almost all $\omega\in \Omega$, $\sum_{n=0}^\infty \gamma_n(\omega) x_n$ is $\tau$-summable, i.e.\ summable in $\scrX_\tau$.
		\label{it:IN2}
	\end{enumerate}
	Moreover, whether or not the conditions above hold depends only on $\{x_n\}_{n=0}^\infty$ and the laws of each of $\gamma_0,\gamma_1,\gamma_2,\cdots$.  
\end{theorem}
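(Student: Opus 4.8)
The implication (I) $\Rightarrow$ (II) is immediate: since $\tau$ is weaker than the norm topology, every norm-convergent series is $\tau$-convergent, so a.s.\ norm-summability forces a.s.\ $\tau$-summability. All the content lies in (II) $\Rightarrow$ (I). I would first invoke the reductions recorded in the remarks following the statement: it suffices to treat $\scrX$ separable and $\tau = \sigma(\scrX,\calS)$ for a fixed countable norming family $\calS = \{\Lambda_k\}_{k=0}^\infty$ with $\lVert\Lambda_k\rVert = 1$, so that $\lVert x\rVert = \sup_k |\Lambda_k x|$ exhibits the norm as a countable supremum of $\tau$-continuous seminorms; this is where admissibility of $\tau$ enters essentially (it guarantees such an $\calS$ exists). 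Writing $X_n = \gamma_n x_n$ and $S_N = \sum_{n=0}^N X_n$, the $X_n$ are independent and symmetric, and (II) says exactly that $S_N$ $\tau$-converges a.s.\ to a limit $S \in \scrX$. Because $\sigma(\calS)$ is the Borel $\sigma$-algebra of the separable space $\scrX$, the map $S : \Omega \to \scrX$ is measurable, $\Lambda_k(S_N) \to \Lambda_k(S)$ a.s.\ for each $k$, and hence $\langle f, S_N\rangle \to \langle f, S\rangle$ in probability for every $f \in \operatorname{span}\calS$.

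The two elementary tools I would set up are: (i) Lévy's maximal inequality for symmetric sums, $\bbP(\max_{m\le j\le N}\lVert S_j - S_m\rVert > t) \le 2\,\bbP(\lVert S_N - S_m\rVert > t)$, valid for any measurable seminorm and in particular the norm; and (ii) the symmetrization inequality $\bbP(\lVert Y\rVert \ge t) \le 2\,\bbP(\lVert Y + Z\rVert \ge t)$ for $Y,Z$ independent with $Z$ symmetric, which follows from $2Y = (Y+Z)+(Y-Z)$ and $Y+Z \overset{d}{=} Y - Z$. Applying (i) to the tail blocks $S_N - S_m$ and letting $N\to\infty$ shows that norm convergence in probability automatically upgrades to a.s.\ norm convergence; so the theorem reduces to the classical Itô--Nisio assertion that, for independent symmetric summands, $\tau$-convergence forces norm convergence \emph{in probability}.

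The plan for that assertion is to prove weak convergence of the laws $\mu_N := \calL(S_N)$ to $\mu := \calL(S)$ and then extract the norm statement. The main obstacle --- and the technical heart of the argument --- is \emph{uniform tightness} of $\{\mu_N\}$, which cannot follow from the finite-dimensional ($\tau$) data alone (cf.\ the counterexample in the non-admissible case), and which I would obtain by using symmetry twice. Since $S$ is a genuine $\scrX$-valued variable, $\mu$ is Radon, so there is a symmetric compact $K$ with $\mu(K) \ge 1-\epsilon$. Write $S = S_N + R_N$ with $R_N := S - S_N$ independent of $S_N$ and symmetric, and pass to an independent copy $(S', S_N', R_N')$; then $S - S' \in K-K$ with probability $\ge 1-2\epsilon$, and with $Y = S_N - S_N'$, $Z = R_N - R_N'$ the reflection $Y - Z \overset{d}{=} Y + Z$ places $Y \in \tfrac12\big((K-K)+(K-K)\big) =: C$ with probability $\ge 1-4\epsilon$, where $C$ is a symmetric compact set \emph{independent of $N$}. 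This tightens the symmetrizations $\mu_N \ast \mu_N$ uniformly. To descend to $\mu_N$ itself I would use symmetry of $\mu_N$: if $\mu_N(C + y)$ is close to $1$, then by symmetry of $\mu_N$ and of $C$ also $\mu_N(C - y)$ is close to $1$, so $(C+y)\cap(C-y)\ne\varnothing$ and therefore $2y \in C-C$; hence every center $y$ carrying most of the mass lies in the fixed compact $\tfrac12(C-C)$, and one reads off $\mu_N\big(\tfrac12(C-C)\big) \ge 1 - O(\sqrt\epsilon)$ uniformly in $N$. This is the uniform tightness.

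With $\{\mu_N\}$ tight, Prokhorov's theorem makes it relatively weakly compact; any subsequential limit $\nu$ has characteristic functional $\hat\nu(f) = \lim_N \hat\mu_N(f) = \hat\mu(f)$ for $f \in \operatorname{span}\calS$ (taking real parts when $\bbK=\bbC$), and since $\operatorname{span}\calS$ is point-separating and generates the Borel $\sigma$-algebra, $\nu = \mu$, so $\mu_N \Rightarrow \mu$. I would then return to the convolution identity $\hat\mu(f) = \hat\mu_N(f)\,\hat\rho_N(f)$, where $\rho_N := \calL(R_N)$ and each factor is real-valued in $[-1,1]$ (whence $|\hat\mu_N(f)|$ is nonincreasing in $N$): on the neighborhood of $0$ where $\hat\mu \ne 0$ one gets $\hat\rho_N(f)\to 1$, and as $\{\rho_N\}$ is tight, every weak limit of $\rho_N$ is a symmetric law whose marginal under each $\Lambda_k$ has characteristic function $\equiv 1$ near $0$, hence equals $\delta_0$; since $\calS$ is norming this forces $\rho_N \Rightarrow \delta_0$, i.e.\ $R_N \to 0$ in probability, i.e.\ $S_N \to S$ in probability. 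Tool (i) then delivers a.s.\ norm convergence, which is (I). Finally, the ``moreover'' clause is separate and easy: for fixed $\{x_n\}$ the event that $\sum_n \gamma_n x_n$ converges in norm is a tail event of the independent sequence $(\gamma_n)$, so by Kolmogorov's $0$--$1$ law its probability is $0$ or $1$ and is computed from the product of the marginal laws of the $\gamma_n$; it therefore depends only on $\{x_n\}$ and those laws, and by the equivalence just proved the same holds for condition (II).
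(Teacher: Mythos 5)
Your proposal is correct, and its core (II) $\Rightarrow$ (I) argument takes a genuinely different route from the paper's. The shared skeleton is: reduce to separable $\scrX$ and a countable norming family $\calS$ with $\sigma(\calS)=\operatorname{Borel}(\scrX)$ (\Cref{prop:con}), regard the $\tau$-limit $S$ as an honest $\scrX$-valued random variable (\Cref{prop:rv}), prove uniform tightness of the partial sums by exploiting symmetry, deduce norm convergence in probability, and upgrade to almost-sure convergence via L\'evy's maximal inequality (this last reduction is step 1 of \Cref{prop:IN}). The two middle steps differ. For tightness, the paper proves the reflection identity that $\Sigma_\infty$ and $\Sigma_\infty-2\Sigma_N$ are equidistributed (\Cref{prop:equid}, via characteristic functionals on $\scrX_\tau^*$), after which tightness is a one-line union bound since $\Sigma_N = \tfrac12\bigl(\Sigma_\infty - (\Sigma_\infty - 2\Sigma_N)\bigr)$; you instead symmetrize against an independent copy (your reflection $Y-Z \overset{d}{=} Y+Z$ needs only independence and symmetry, no Fourier argument) and then descend from the symmetrized laws to $\mu_N$ by a Fubini--Markov concentration argument --- longer, but it keeps Fourier analysis out of this step. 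For the passage from tightness to convergence in probability, the paper stays pathwise and never invokes weak convergence of measures: if $\Sigma_N\not\to\Sigma_\infty$ in probability, then with positive probability $\Sigma_\infty-\Sigma_{N_k}$ lies in a fixed compact set off an $\varepsilon$-ball infinitely often, and sequential compactness yields a strong subsequential limit $\Delta(\omega)\neq 0$, contradicting $\tau$-convergence because $\tau$ is Hausdorff. You instead run the classical It\^o--Nisio machinery: Prokhorov's theorem, identification of subsequential limits by characteristic functionals, the convolution identity $\hat\mu = \hat\mu_N\hat\rho_N$, and $\rho_N\Rightarrow\delta_0$. What each buys: the paper's route is shorter and requires neither Prokhorov nor the theory of weak convergence on $\scrX$, while yours is the textbook scheme and extends most directly to the convergence-in-probability clauses of the full It\^o--Nisio theorem, which the paper deliberately omits. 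Two cosmetic remarks: in your descent step, the union bound alone places mass $1-4\epsilon$ on $C+\tfrac12(C-C)$, and it is only after the Fubini--Markov selection of centers that one gets mass $1-O(\sqrt{\epsilon})$ on $\tfrac12(C-C)$ itself --- either reading produces a fixed compact set, so nothing breaks; and in the ``moreover'' clause the $0$--$1$ law is not needed, since independence already makes the joint law of $(\gamma_n)_{n}$ the product of the marginals, which determines the probability of the (by \Cref{lem:meas}, measurable) convergence event.
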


This result is essentially contained in \cite{Hoffmann1974}, but, since our formulation is slightly different, we present a proof in \S\ref{sec:IN} below.

See \cite{ABS} for a modern account of the It\^o--Nisio  result in the case when $\tau$ is the weak topology. Our proof follows theirs.

A special case of this theorem was stated in \cite{me}, and the proof was sketched. This paper fills in some details of that sketch.\footnote{See \cite[Thm. 3.11]{me}. The statement there involves convergence in probability, but the proof in \S\ref{sec:IN} below applies.}

\begin{remark}	
	We will refer to \Cref{thm:IN} as ``the It\^o--Nisio theorem,'' with the following three caveats:
	\begin{itemize}
		\item Unlike in the usual It\^o--Nisio theorem, we do not discuss convergence in probability.
		\label{it:cv1}
		\item 
		The result is often stated with general Bochner-measurable symmetric and independent random variables $x_n(\omega):\Omega\to \scrX^\bbN$ in place of $\gamma_n(\omega)x_n$. (A $\scrX$-valued random variable $X$ will be called \emph{symmetric} if $X$ and $-X$ are equidistributed, i.e.\ have the same law.\footnote{Note that, if $\bbK=\bbC$, this convention differs from some in the literature, in particular \cite[Definition 6.1.4]{ABS}. (We use `symmetric' when they would use `real-symmetric.')}) In fact, \Cref{thm:IN} implies the more general version via a rerandomization argument.
		\item It\^o and Nisio only consider the case when $\tau$ is the weak topology, the generalization to admissible $\tau$ being the result of \cite{Hoffmann1974}. 
	\end{itemize}
\end{remark}
\begin{remark} 
	A strengthening of the It\^o--Nisio result in the case when $\scrX$ does not admit an isometric embedding $c_0\hookrightarrow \scrX$  is essentially contained -- and explicitly conjectured -- in \cite{Hoffmann1974}. 
	The proof is due to 
	Kwapie\'n \cite{Kwapien1974}. If (and only if) $\scrX$ does not admit an isometric embedding $c_0\hookrightarrow \scrX$, then (I), (II) in \Cref{thm:IN} are equivalent to
	\begin{itemize}
		\item[(III)] for almost all $\omega\in \Omega$, $\sup_{N\in \bbN} \lVert \sum_{n=0}^N \epsilon_n(\omega) x_n \rVert < \infty$. 
	\end{itemize}
	(The event described above, that of ``uniform boundedness,'' is also measurable. See \Cref{lem:meas}.)
	
	Recall that -- by the uniform boundedness principle -- the weak convergence of a sequence $\{X_N\}_{N=0}^\infty \subseteq\scrX$ implies that $\sup_N \lVert X_N \rVert< \infty$, so (II) implies (III) when $\tau$ is the weak topology. Condition (I) obviously implies (III), so by the It\^o--Nisio theorem (once we've proven it), (II) implies (III) for any admissible $\tau$. 
	The converse obviously does not hold if $\scrX$ admits an isometric embedding $c_0\hookrightarrow \scrX$. 
\end{remark} 
\begin{remark} 	
	By \Cref{lem:meas}, the events described in (I), (III) above are measurable, and so, \Cref{thm:IN} is a statement about their probabilities. If $\scrX$ is separable and $\tau$ is the topology generated by a countable norming collection of functionals, the event in (II) is measurable as well. 
	It is a consequence of \Cref{thm:IN} that, if the probability space $(\Omega,\calF,\bbP)$ is complete, then (II) is measurable regardless. 
\end{remark}

An outline for the rest of this note is as follows:
\begin{itemize}
	\item In \S\ref{sec:preliminaries}, we fill in some measure-theoretic details related to the main line of argument. 
	\item We prove the It\^o--Nisio theorem in \S\ref{sec:IN} using a version of the standard argument based on uniform tightness and L\'evy's maximal inequality.
	\item Using \Cref{thm:IN}, we prove the probabilist's Orlicz--Pettis theorem in \S\ref{sec:main}
\end{itemize}

\section{Measurability}
\label{sec:preliminaries}

Let $\scrX$ be an arbitrary separable Banach space over $\bbK \in \{\bbR,\bbC\}$, and let $\tau$ be an admissible topology on it.
Below, $\epsilon_0,\epsilon_1,\epsilon_2,\cdots$ will be as in \Cref{thm:IN}, i.i.d.\ Rademacher random variables $\Omega\to \{-1,+1\}$. Similarly, $\chi_0,\chi_1,\chi_2,\cdots$ will be i.i.d.\ uniformly distributed $\Omega\to \{0,1\}$. 

\begin{lemma}
	The function $S:\Omega\to \scrX^\bbN$ defined by \cref{eq:Si} is measurable with respect to the Borel $\sigma$-algebra $\operatorname{Borel}(\scrX^\bbN)$, so it is a well-defined random formal $\scrX$-valued series.
	\label{lem:meas_0}
\end{lemma}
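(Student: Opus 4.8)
The plan is to reduce measurability of $S$ to the measurability of its coordinate maps, exploiting separability of $\scrX$ to identify the Borel $\sigma$-algebra on the product $\scrX^\bbN$ with the product $\sigma$-algebra. Recall that $S(\omega)$ is the sequence $\{\chi_n(\omega) x_n\}_{n=0}^\infty \in \scrX^\bbN$, and that $\scrX^\bbN$ carries the product topology with each factor given the norm topology.

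First I would establish that
\[
	\operatorname{Borel}(\scrX^\bbN) = \bigotimes_{n=0}^\infty \operatorname{Borel}(\scrX),
\]
the product $\sigma$-algebra generated by the coordinate projections $\pi_n : \scrX^\bbN \to \scrX$. Since $\scrX$ is a separable Banach space it is second countable, and a countable product of second countable spaces is again second countable; for such spaces the Borel $\sigma$-algebra of the product coincides with the product of the Borel $\sigma$-algebras. Concretely, each $\pi_n$ is continuous, so $\bigotimes_n \operatorname{Borel}(\scrX) \subseteq \operatorname{Borel}(\scrX^\bbN)$; conversely, fixing a countable base $\{U_k\}_{k=0}^\infty$ for the topology of $\scrX$, the sets $\pi_n^{-1}(U_k)$ form a countable subbase for the product topology, every open subset of $\scrX^\bbN$ is a countable union of finite intersections of these, and each such basic set lies in $\bigotimes_n \operatorname{Borel}(\scrX)$, giving the reverse inclusion.

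With this identification in hand, a map $\Omega \to \scrX^\bbN$ is $(\calF,\operatorname{Borel}(\scrX^\bbN))$-measurable if and only if each of its coordinate maps $\omega \mapsto \pi_n(S(\omega)) = \chi_n(\omega) x_n$ is $(\calF,\operatorname{Borel}(\scrX))$-measurable, which is the universal property of the product $\sigma$-algebra. It then remains to check each coordinate. But $\omega \mapsto \chi_n(\omega) x_n$ factors as the Bernoulli random variable $\chi_n : \Omega \to \{0,1\}$, which is $\calF$-measurable by hypothesis, followed by the continuous (hence Borel) map $\{0,1\} \to \scrX$, $t \mapsto t x_n$; the composite is therefore measurable. (Equivalently, one checks directly that the preimage of a Borel $B \subseteq \scrX$ is one of $\varnothing$, $\chi_n^{-1}(\{0\})$, $\chi_n^{-1}(\{1\})$, or $\Omega$, according to whether $B$ contains $0$ and $x_n$, all of which lie in $\calF$.)

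The only real obstacle is the first step, the identification of $\operatorname{Borel}(\scrX^\bbN)$ with the product $\sigma$-algebra, and this is precisely where separability of $\scrX$ is indispensable: for a non-separable $\scrX$ the Borel $\sigma$-algebra of the product can be strictly larger than $\bigotimes_n \operatorname{Borel}(\scrX)$, so coordinatewise measurability would no longer guarantee measurability into $\scrX^\bbN$. Once separability has been used to settle this point, the remainder is routine.
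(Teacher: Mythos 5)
Your proof is correct and follows essentially the same route as the paper: identify $\operatorname{Borel}(\scrX^\bbN)$ with the product $\sigma$-algebra (using separability of $\scrX$), then reduce to measurability of the coordinate maps $\omega \mapsto \chi_n(\omega)x_n$. The only difference is cosmetic --- the paper cites this identification from the literature (Kallenberg, Lemma 1.2), whereas you prove it inline via second countability.
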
 
\begin{proof}
	The Borel $\sigma$-algebra of a countable product of separable metric spaces agrees with the product $\calP$ of the Borel $\sigma$-algebras of the individual factors \cite[Lemma 1.2]{Kallenberg}.  So,   $\operatorname{Borel}(\scrX^\bbN)=\sigma(\operatorname{eval}_n:n\in \bbN) = \calP$, where 
	\begin{equation}
		\operatorname{eval}_n:\scrX^\bbN\to \scrX
	\end{equation}
	is shorthand for the map $\sum_{n=0}^\infty x_n \mapsto x_n$. 
	To deduce that $S$ is Borel measurable, we just observe that it is measurable with respect to the $\sigma$-algebra $\sigma(\operatorname{eval}_n: n\in \bbN)$, since $\mathrm{eval}_n\circ S(\omega) = \chi_n(\omega) x_n$.
\end{proof}  

Let $\mathtt{P}_{\mathrm{I}},\mathtt{P}_{\mathrm{II}},\mathtt{P}_{\mathrm{III}} \subseteq\scrX^\bbN$ denote the sets of (I) strongly summable formal series, (II) $\tau$-summable formal series, and (III) bounded formal series, 
respectively. 
In other words, 
\begin{equation}\textstyle{
\mathtt{P}_{\mathrm{I}} =\{\{x_n\}_{n=0}^\infty  \in \scrX^\bbN :\lim_{N\to\infty}\sum_{n=0}^N x_n \text{ exists in $\scrX$} \}},
\end{equation}
\begin{equation}\textstyle{
\mathtt{P}_{\mathrm{II}} =\{\{x_n\}_{n=0}^\infty \in \scrX^\bbN :\tau\!-\!\lim_{N\to\infty}\sum_{n=0}^N x_n \text{ exists in $\scrX_\tau$} \}},
\end{equation}
\begin{equation}\textstyle{
\mathtt{P}_{\mathrm{III}} =\{\{x_n\}_{n=0}^\infty  \in \scrX^\bbN :\sup_{N\in \bbN} \lVert \sum_{n=0}^N x_n \rVert<\infty\}}.
\end{equation}
Likewise, given a countable norming subset $\calS \subseteq\scrX^*_\tau $, let 
\begin{equation}\textstyle{
	\mathtt{P}_{\mathrm{II}'} = \mathtt{P}_{\mathrm{II}'}(\calS) = \{\{x_n\}_{n=0}^\infty \in \scrX^\bbN :\calS\!-\!\lim_{N\to\infty}\sum_{n=0}^N x_n \text{ exists in $\scrX_{ \sigma(\scrX,\calS) }$} \}}
\end{equation}
denote the set of  $\calS$-weakly summable formal $\scrX$-valued series. 

\begin{lemma}\label{lem:meas}
	$\mathtt{P}_{\mathrm{I}},\mathtt{P}_{\mathrm{II}'},\mathtt{P}_{\mathrm{III}} \in \operatorname{Borel}(\scrX^\bbN)$. 
	Consequently, given any random formal series $\Sigma : \Omega\to \scrX^\bbN$, $\Sigma^{-1}(\mathtt{P}_i) \in \calF$ for each $i \in \{\mathrm{I},\mathrm{II}',\mathrm{III}\}$. 
\end{lemma}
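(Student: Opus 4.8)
The plan is to reduce everything to the measurability of the partial-sum maps and then express each of the three sets as a countable Boolean combination of, or a Borel preimage built from, those maps. As in the proof of \Cref{lem:meas_0} we have $\operatorname{Borel}(\scrX^\bbN) = \sigma(\operatorname{eval}_n : n\in\bbN)$, so each partial-sum map $\Sigma_N = \sum_{n=0}^N \operatorname{eval}_n : \scrX^\bbN \to \scrX$ is a finite sum of measurable maps and hence Borel. Composing with the (continuous) norm and with each functional $\Lambda_j$ of a countable norming set $\calS = \{\Lambda_j\}_{j=0}^\infty$ shows that $\{x_n\}\mapsto \lVert\Sigma_N\rVert$, $\{x_n\}\mapsto \lVert\Sigma_N-\Sigma_{N'}\rVert$, and $\{x_n\}\mapsto \Lambda_j\Sigma_N$ are all Borel real- or scalar-valued functions on $\scrX^\bbN$.

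Given this, $\mathtt{P}_{\mathrm{III}}$ and $\mathtt{P}_{\mathrm{I}}$ are immediate. Indeed $\mathtt{P}_{\mathrm{III}} = \bigcup_{k=1}^\infty \bigcap_{N=0}^\infty \{\lVert\Sigma_N\rVert \le k\}$ is a countable union of countable intersections of Borel sets. For $\mathtt{P}_{\mathrm{I}}$ I would invoke the completeness of $\scrX$, so that norm-summability is equivalent to the partial sums being norm-Cauchy, giving $\mathtt{P}_{\mathrm{I}} = \bigcap_{k=1}^\infty \bigcup_{M=0}^\infty \bigcap_{N,N'\ge M}\{\lVert\Sigma_N-\Sigma_{N'}\rVert\le 1/k\}$, again Borel.

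The set $\mathtt{P}_{\mathrm{II}'}$ is the substantive case, because $\calS$-weak Cauchyness does not entail convergence to an element of $\scrX$. First I would isolate the necessary Cauchy condition: let $A\subseteq\scrX^\bbN$ be the set on which each scalar sequence $\{\Lambda_j\Sigma_N\}_N$ converges; by the scalar Cauchy criterion (as for $\mathtt{P}_{\mathrm{I}}$) $A$ is Borel, and $\mathtt{P}_{\mathrm{II}'}\subseteq A$. On $A$ the coordinates $\ell_j = \lim_N \Lambda_j\Sigma_N$ are pointwise limits of Borel functions, hence Borel, so they assemble into a Borel map $L:A\to\bbK^\bbN$, $\{x_n\}\mapsto(\ell_j)_j$. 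Writing $\iota:\scrX\to\bbK^\bbN$ for the evaluation map $\iota(x)=(\Lambda_j x)_j$, the $\calS$-limit of $\{\Sigma_N\}$ exists in $\scrX$ precisely when $(\ell_j)_j$ is realized by a (necessarily unique) element of $\scrX$; that is, $\mathtt{P}_{\mathrm{II}'} = L^{-1}(\iota(\scrX))$.

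The main obstacle is therefore to show that $\iota(\scrX)$ is a Borel subset of $\bbK^\bbN$. The map $\iota$ is continuous, since each coordinate $x\mapsto\Lambda_j x$ is, and injective, since $\calS$ is norming: $\iota(x)=\iota(y)$ forces $\lVert x-y\rVert = \sup_j|\Lambda_j(x-y)| = 0$. As $\scrX$ is a separable Banach space it is a Polish space, and so is $\bbK^\bbN$; I would then invoke the Lusin--Souslin theorem, that a continuous injective image of a Borel set between Polish spaces is Borel, to conclude that $\iota(\scrX)$ is Borel and hence $\mathtt{P}_{\mathrm{II}'}=L^{-1}(\iota(\scrX))$ is Borel. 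I expect this to be the only genuinely nontrivial point: elementary substitutes fail, since e.g.\ writing $\iota(\scrX)=\bigcup_{r\in\bbN}\iota(r\bbB)$ does not help, as $\iota(r\bbB)$ need not be closed in $\bbK^\bbN$ (a norm-bounded sequence may converge coordinatewise to a limit lying in $\scrX^{**}\setminus\scrX$ rather than in $\scrX$), so the injective-image theorem is essential. Finally, the ``consequently'' clause is automatic: any random formal series $\Sigma$ is by definition a measurable map $\Omega\to\scrX^\bbN$, so $\Sigma^{-1}(\mathtt{P}_i)\in\calF$ for each $i\in\{\mathrm{I},\mathrm{II}',\mathrm{III}\}$.
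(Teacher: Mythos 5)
Your proof is correct, but for the substantive case $\mathtt{P}_{\mathrm{II}'}$ it takes a genuinely different route from the paper. The paper, like you, handles $\mathtt{P}_{\mathrm{I}}$ and $\mathtt{P}_{\mathrm{III}}$ via norm-Cauchy/boundedness conditions on the partial sums; but for $\mathtt{P}_{\mathrm{II}'}$ it stays elementary: it fixes a countable dense set $\scrX_0\subseteq\scrX$ and proves that $\{\Sigma_N\}$ converges $\calS$-weakly to an element of $\scrX$ if and only if for every rational $\varepsilon>0$ there is an approximant $X_{\approx}(\varepsilon)\in\scrX_0$ with $|\Lambda(\Sigma_N-X_{\approx})|<\varepsilon$ eventually, for each $\Lambda\in\calS$. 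The norming property of $\calS$ is used to show the approximants $X_{\approx}(1/N)$ are norm-Cauchy, and completeness of $\scrX$ then produces the actual limit; this turns $\mathtt{P}_{\mathrm{II}'}$ into an explicit countable Boolean combination of sets $\{|\Lambda(\Sigma_N-X_{\approx})|<\varepsilon\}$. You instead separate the problem into (i) coordinatewise convergence (Borel, as you say) and (ii) realizability of the limit coordinates by an element of $\scrX$, i.e.\ $\mathtt{P}_{\mathrm{II}'}=L^{-1}(\iota(\scrX))$, and then invoke the Lusin--Souslin theorem to see that the continuous injective image $\iota(\scrX)\subseteq\bbK^\bbN$ is Borel (injectivity being where norming enters for you). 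Your reduction is clean and modular, and your remark that $\iota(r\bbB)$ need not be closed (e.g.\ $\scrX=c_0$ with the coordinate functionals) correctly identifies why some nontrivial input is unavoidable; the trade-off is that you import a genuinely deeper tool from descriptive set theory, whereas the paper's dense-approximant trick needs nothing beyond completeness, norming, and countability, and yields an explicit low-complexity formula for $\mathtt{P}_{\mathrm{II}'}$. Both arguments are valid, and the ``consequently'' clause is handled identically (it is immediate from measurability of $\Sigma$).
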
	

\begin{proof}
	For each $M,N\in \bbN$, the function $\frakN_{N,M}:\scrX^\bbN\to \bbR$ given by 
	\begin{equation}
	\frakN_{N,M}(\{x_n\}_{n=0}^\infty) =  \Big\lVert \sum_{n=M}^N x_n \Big\rVert
	\end{equation}
	satisfies $\frakN_{N,M}^{-1}(S) \in \calP$ for all $S\in \operatorname{Borel}(\bbR)$. Therefore, $\mathtt{P}_{\mathrm{III}} = \cup_{R\in \bbN} \cap_{N\in \bbN} \frakN_{N,0}^{-1}([0,R])$ is in $\calP$, as is 
	\begin{equation} 
	\mathtt{P}_{\mathrm{I}} = \bigcap_{R\in \bbN^+} \bigcup_{M\in \bbN} \bigcap_{N\geq M} \frakN_{N,M}^{-1}([0,1/R]). 
	\end{equation} 
	
	Let $\scrX_0\subseteq\scrX$ denote a dense countable subset. 
	\emph{Claim}: a  sequence $\{X_N\}_{N=0}^\infty \subseteq\scrX$ converges $\calS$-weakly if and only if for each rational $\varepsilon>0$ there exists $X_{\approx} = X_{\approx}(\varepsilon) \in \scrX_0$ such that for each $\Lambda \in \calS$ there exists a $N_0=N_0(\varepsilon,\Lambda) \in \bbN$ such that 
	\begin{equation} 
	|\Lambda(X_N-X_{\approx})|< \varepsilon 
	\label{eq:LNe}
	\end{equation} 
	for all $N\geq N_0$.
	\begin{itemize}
		\item Proof of `only if:' if $X_N\to X$ $\calS$-weakly, then, for each $\varepsilon>0$, choose $X_{\approx} = X_{\approx}(\varepsilon) \in \scrX_0$ such that $\lVert X - X_{\approx} \rVert < \varepsilon/2$, and for each $\Lambda \in \calS$ choose $N_0(\varepsilon,\Lambda)$ such that $|\Lambda(X_N-X)|<\varepsilon/2$ for all $N\geq N_0$.  
		
		Since the elements of $\calS$ have operator norm at most one, $|\Lambda(X - X_{\approx}) | < \varepsilon/2$.
		
		Combining these two inequalities, \cref{eq:LNe} holds for all $N\geq N_0$.
		\item Proof of `if:' suppose we are given $X_{\approx}(\varepsilon)$ with the desired property. First, observe that $\{X_{\approx}(1/N)\}_{N=1}^\infty$ is Cauchy. Indeed, it follows from the definition of the $X_{\approx}(\varepsilon)$ that  $|\Lambda(X_{\approx}(\varepsilon)-X_{\approx}(\varepsilon'))| < \varepsilon+\varepsilon'$ for all $\Lambda \in \calS$, which implies (since $\calS$ is norming) that $\lVert X_{\approx}(\varepsilon)-X_{\approx}(\varepsilon') \rVert \leq \varepsilon+\varepsilon'$. 
		So, by the completeness of $\scrX$, there exists some $X\in \scrX$ such that
		\begin{equation}
		\lim_{N\to\infty} X_{\approx}(1/N) = X. 
		\end{equation}
		We now need to show that, as $N\to\infty$, $X_N\to X$ $\calS$-weakly. Indeed, given any $\Lambda \in \calS$ and $M\in \bbN^+$, 
		\begin{equation}
		|\Lambda(X_N-X)|\leq |\Lambda(X_N-X_{\approx}(1/M))| + |\Lambda(X-X_{\approx}(1/M))|.
		\end{equation}
		Given any $\varepsilon>0$, pick $M$ such that $1/M<\varepsilon/2$ and such that $\lVert X_{\approx}(1/M) - X \rVert < \varepsilon/2$. 
		Since the elements of $\calS$ have operator norm at most one, $|\Lambda(X-X_{\approx}(1/M))|<\varepsilon/2$. By the hypothesis of this direction, we can choose $N_0 = N_0(\varepsilon,\Lambda)$ sufficiently large such that $|\Lambda(X_N-X_{\approx}(1/M))| < 1/M<\varepsilon/2$ for all $N\geq N_0$. Therefore, $|\Lambda(X_N-X)| < \varepsilon$ for all $N\geq N_0$. 
		It follows that $X_N\to X$ $\calS$-weakly.
	\end{itemize}
	We therefore conclude that 
	\begin{equation} 
	\mathtt{P}_{\mathrm{II}'} =  \bigcap_{\varepsilon>0,\varepsilon \in \bbQ} \bigcup_{X_{\approx}\in \scrX_0} \bigcap_{\Lambda \in \calS} \bigcup_{M \in \bbN} \bigcap_{N\geq M} \{\{x_n\}_{n=0}^\infty : |\Lambda(X_N - X_{\approx})| <  \varepsilon\}
	\end{equation} 
	is in $\calP$ as well, where $X_N=x_0+\cdots+x_{N-1}$, which depends measurably on $\{x_n\}_{n=0}^\infty$.
	
\end{proof}

\begin{remark}
	We do not address the question of when $\mathtt{P}_{\mathrm{II}}$ is Borel.
	Even when $\scrX^*_\tau$ is not second countable, it can be the case that $\mathtt{P}_{\mathrm{II}} \in \calP$. For example, if $\scrX = \ell^1(\bbN)$, then sequential weak convergence is equivalent to sequential strong convergence \cite[Theorem 6.2]{Carothers}, and hence $\mathtt{P}_{\mathrm{I}} = \mathtt{P}_{\mathrm{II}}$.
\end{remark}

	Let $\pi_N:\scrX^\bbN\to \scrX^\bbN$ denote the left-shift map $\sum_{n=0}^\infty x_n \mapsto \sum_{n=0}^\infty x_{n+N}$. Let $\pi_N^* \calP = \{ \pi_N^{-1}(S):S\in \calP\}$. 
\begin{lemma} 	\label{lem:01}
	Let $\mathtt{P}_\mathrm{I},\mathtt{P}_\mathrm{II'},\mathtt{P}_\mathrm{III}$ be as above. Then 
	\begin{equation}
	\mathtt{P}_\mathrm{I},\mathtt{P}_\mathrm{II'},\mathtt{P}_\mathrm{III} \in \calT, 
	\end{equation}
	where $\calT \subseteq\operatorname{Borel}(\scrX^\bbN)$ is the ``tail $\sigma$-algebra'' $\calT = \cap_{N\in \bbN} \pi_N^* \calP$. Consequently, given any $\bbK$-valued random variables $\lambda_0,\lambda_1,\lambda_2,\cdots : \Omega\to \bbK$, the random formal series $\Sigma:\Omega\to \scrX^\bbN$ given by $\Sigma(\omega) = \sum_{n=0}^\infty \lambda_n(\omega) x_n$  is such that 
	\begin{equation}
	\Sigma^{-1}(\mathtt{P}_i) \in \cap_{N\in \bbN} \sigma(\{\lambda_n\}_{n=N}^\infty)
	\label{eq:SF}
	\end{equation}
	for each $i\in \{\mathrm{I},\mathrm{II'},\mathrm{III}\}$.
\end{lemma}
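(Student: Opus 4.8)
The plan is to reduce both assertions to a single structural fact: each of $\mathtt{P}_{\mathrm{I}},\mathtt{P}_{\mathrm{II}'},\mathtt{P}_{\mathrm{III}}$ is invariant under deleting finitely many initial terms, i.e.\ is fixed by every left-shift. Precisely, I would first prove
\begin{equation}
	\mathtt{P}_i = \pi_N^{-1}(\mathtt{P}_i) \qquad (N\in\bbN,\ i\in\{\mathrm{I},\mathrm{II}',\mathrm{III}\}).
\end{equation}
Since $\mathtt{P}_i\in\calP$ by \Cref{lem:meas}, this identity exhibits $\mathtt{P}_i$ as $\pi_N^{-1}(S)$ with $S=\mathtt{P}_i\in\calP$, whence $\mathtt{P}_i\in\pi_N^*\calP$; as $N$ is arbitrary, $\mathtt{P}_i\in\cap_{N\in\bbN}\pi_N^*\calP=\calT$, which is the first claim.

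To verify the shift-invariance, write $c=\sum_{n=0}^{N-1}x_n$ for the sum of the deleted terms, so that the partial sums of $\pi_N(\{x_n\}_n)$ are $\sum_{k=0}^M x_{k+N}=\big(\sum_{n=0}^{N+M}x_n\big)-c$, namely the original partial sums from index $N$ onward, shifted by the constant $-c$. For $\mathtt{P}_{\mathrm{I}}$ this settles matters at once: a series converges in norm iff its tail does, and translation by $c$ affects neither convergence nor membership of the limit in $\scrX$. The case $\mathtt{P}_{\mathrm{II}'}$ is identical, since $\calS$-weak convergence to a point of $\scrX$ is also preserved under adding $c$ (if $Y_M\to Y$ $\calS$-weakly then $Y_M+c\to Y+c$ $\calS$-weakly). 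The one case requiring a word of care is $\mathtt{P}_{\mathrm{III}}$, where I would instead use the triangle inequality in both directions, $\lVert\sum_{n=N}^{N+M}x_n\rVert\le\lVert c\rVert+\lVert\sum_{n=0}^{N+M}x_n\rVert$ and $\lVert\sum_{n=0}^{K}x_n\rVert\le\lVert c\rVert+\lVert\sum_{n=N}^{K}x_n\rVert$ for $K\ge N$ (the finitely many $K<N$ being harmless), to conclude that the partial sums of $\{x_n\}_n$ are uniformly bounded iff those of $\pi_N(\{x_n\}_n)$ are.

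For the concluding statement, fix $N$ and apply the identity above to get
\begin{equation}
	\Sigma^{-1}(\mathtt{P}_i)=\Sigma^{-1}\big(\pi_N^{-1}(\mathtt{P}_i)\big)=(\pi_N\circ\Sigma)^{-1}(\mathtt{P}_i).
\end{equation}
The composite $\pi_N\circ\Sigma$ sends $\omega$ to the formal series whose $k$th coordinate is $\lambda_{k+N}(\omega)x_{k+N}$; since $c\mapsto cx_{k+N}$ is continuous $\bbK\to\scrX$ and $k+N\ge N$, each coordinate is $\sigma(\{\lambda_n\}_{n=N}^\infty)$-measurable, so $\pi_N\circ\Sigma$ is measurable from $(\Omega,\sigma(\{\lambda_n\}_{n=N}^\infty))$ into $(\scrX^\bbN,\calP)$, using $\operatorname{Borel}(\scrX^\bbN)=\calP=\sigma(\operatorname{eval}_k:k\in\bbN)$. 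As $\mathtt{P}_i\in\calP$, it follows that $\Sigma^{-1}(\mathtt{P}_i)\in\sigma(\{\lambda_n\}_{n=N}^\infty)$, and intersecting over $N\in\bbN$ gives \cref{eq:SF}. The only genuinely non-routine point is the boundedness case of the shift-invariance; everything else is bookkeeping with the coordinate description of $\calP$.
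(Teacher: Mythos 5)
Your proposal is correct and follows essentially the same route as the paper's proof: establish the shift-invariance $\pi_N^{-1}(\mathtt{P}_i)=\mathtt{P}_i$, combine it with \Cref{lem:meas} to place $\mathtt{P}_i$ in $\calT$, and then deduce \cref{eq:SF} from the fact that $\pi_N\circ\Sigma$ is measurable with respect to $\sigma(\{\lambda_n\}_{n=N}^\infty)$ and $\calP$ (the paper expresses this as $\Sigma^*\circ\pi_N^*\calP\subseteq\sigma(\{\lambda_n\}_{n=N}^\infty)$). The only difference is that you spell out the details the paper dismisses with ``clearly,'' namely the translation-by-$c$ argument for the three properties and the coordinatewise measurability check.
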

\begin{proof}
	Clearly, $\pi_N^{-1}(\mathtt{P}_{i}) = \mathtt{P}_i$ for each $i\in \{\mathrm{I},\mathrm{II'},\mathrm{III}\}$. By \Cref{lem:meas}, we can therefore conclude that $\mathtt{P}_i \in \calT$. If $\Sigma$ is as above, then $\Sigma^*\circ \pi_N^* \calP \subseteq \sigma(\{\lambda_n\}_{n=N}^\infty) $. Since $\Sigma^{-1}(\mathtt{P}_i)$ is in the left-hand side for each $N\in \bbN$, \cref{eq:SF} follows. 
\end{proof}

\begin{proposition}\label{prop:kol}
	Let $f:\bbN\to \bbN$ satisfy $|f^{-1}(\{n\})|<\infty$ for all $n\in \bbN$. 
	Suppose that $\lambda_0,\lambda_1,\lambda_2,\cdots:\Omega \to \bbK$ are independent random variables on the probability space $(\Omega,\calF,\bbP)$, and consider the random formal series $\Sigma:\Omega\to \scrX^\bbN$ given by 
	\begin{equation}
	\Sigma(\omega) = \sum_{n=0}^\infty \lambda_{f(n)}(\omega) x_n.
	\end{equation}
	Then $\bbP (\Sigma^{-1}(\mathtt{P})) = \bbP[\Sigma \in \mathtt{P}] \in \{0,1\}$ for any element  $\mathtt{P} \in \calT$, and in particular for the sets $\mathtt{P}_i$ for each $i\in \{\mathrm{I},\mathrm{II'},\mathrm{III}\}$. 
\end{proposition}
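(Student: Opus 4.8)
The plan is to reduce the statement to Kolmogorov's zero-one law for the independent family $\lambda_0,\lambda_1,\lambda_2,\cdots$. Concretely, Kolmogorov's theorem tells us that the tail $\sigma$-algebra $\calT_\lambda = \cap_{M\in\bbN}\sigma(\{\lambda_m\}_{m=M}^\infty)$ is $\bbP$-trivial, so every event in it has probability $0$ or $1$. Thus it suffices to show that $\Sigma^{-1}(\mathtt{P}) \in \calT_\lambda$ whenever $\mathtt{P}\in\calT$. The ``in particular'' clause will then be immediate, since \Cref{lem:01} already establishes $\mathtt{P}_i\in\calT$ for each $i\in\{\mathrm{I},\mathrm{II'},\mathrm{III}\}$.

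For the mechanics, fix $\mathtt{P}\in\calT$ and recall $\calT=\cap_N\pi_N^*\calP$, so for every $N\in\bbN$ we may write $\mathtt{P}=\pi_N^{-1}(S_N)$ for some $S_N\in\calP$. Hence $\Sigma^{-1}(\mathtt{P})=(\pi_N\circ\Sigma)^{-1}(S_N)$, where, unwinding the left shift, $\pi_N\circ\Sigma(\omega)=\sum_{n=0}^\infty \lambda_{f(n+N)}(\omega)\,x_{n+N}$. Arguing exactly as in \Cref{lem:meas_0} (composition with the maps $\operatorname{eval}_n$), this random formal series is measurable with respect to $\sigma(\{\lambda_{f(n+N)}:n\in\bbN\})=\sigma(\{\lambda_m : m\in f(\{N,N+1,\dots\})\})$, and therefore so is $\Sigma^{-1}(\mathtt{P})$. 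The heart of the matter is now a combinatorial observation about the hypothesis $|f^{-1}(\{n\})|<\infty$: for any $M\in\bbN$ the preimage $f^{-1}(\{0,1,\dots,M-1\})=\bigcup_{k=0}^{M-1}f^{-1}(\{k\})$ is a \emph{finite} set, so there exists $N=N(M)$ (take $N=1+\max f^{-1}(\{0,\dots,M-1\})$, or $N=0$ if this preimage is empty) with the property that $f(n)\geq M$ for all $n\geq N$. For this choice of $N$ we have $f(\{N,N+1,\dots\})\subseteq\{M,M+1,\dots\}$, whence
\begin{equation}
	\Sigma^{-1}(\mathtt{P}) \in \sigma(\{\lambda_m : m\in f(\{N,N+1,\dots\})\}) \subseteq \sigma(\{\lambda_m\}_{m=M}^\infty).
\end{equation}

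Since $M\in\bbN$ was arbitrary, $\Sigma^{-1}(\mathtt{P})$ lies in $\cap_{M}\sigma(\{\lambda_m\}_{m=M}^\infty)=\calT_\lambda$, and Kolmogorov's zero-one law gives $\bbP(\Sigma^{-1}(\mathtt{P}))\in\{0,1\}$, as desired. The only genuinely nonroutine step is the combinatorial one just described: it is precisely the finite-to-one condition on $f$ that lets us absorb the ``reshuffling'' $n\mapsto f(n)$ into the tail of the $\lambda$-sequence, so that passing to the tail of the \emph{series} (guaranteed by $\mathtt{P}\in\calT$) forces the event into the tail of the \emph{independent family}. Everything else is the bookkeeping of \Cref{lem:meas_0,lem:01} together with the classical zero-one law.
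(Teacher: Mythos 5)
Your proof is correct and takes essentially the same route as the paper's: both reduce the claim to Kolmogorov's zero-one law by placing $\Sigma^{-1}(\mathtt{P})$ in the tail $\sigma$-algebra of the independent family $\{\lambda_m\}_{m=0}^\infty$, with the case of the $\mathtt{P}_i$ handled by \Cref{lem:01}. The only difference is one of explicitness: the paper's proof is terse and leaves implicit the combinatorial step you spell out, namely that the finite-to-one hypothesis on $f$ converts membership in $\bigcap_{N}\sigma(\{\lambda_{f(n)}\}_{n=N}^\infty)$ into membership in $\bigcap_{M}\sigma(\{\lambda_m\}_{m=M}^\infty)$.
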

\begin{proof}
	Since $\lambda_0,\lambda_1,\lambda_2,\cdots$ are now assumed to be independent, that $\bbP[\Sigma \in \mathtt{P}] \in \{0,1\}$ follows immediately from the Kolmogorov zero-one law \cite[Theorem 2.5.3]{Durrett}. 
	By \Cref{lem:01}, this applies to $\mathtt{P}_{\mathrm{I}},\mathtt{P}_{\mathrm{II}'},\mathtt{P}_{\mathrm{III}}$.  
\end{proof}

\begin{proposition}\label{prop:eq}
	Let $f:\bbN\to \bbN$ satisfy $|f^{-1}(\{n\})|<\infty$ for all $n\in \bbN$. 
	Suppose that $\mathtt{P}\subseteq \scrX^\bbN$ is a $\bbK$-subspace and that $\zeta_0,\zeta_1,\zeta_2,\cdots:\Omega\to \bbK$ are a collection of symmetric, independent $\bbK$-valued random variables. 
	
	Then, letting $\Sigma,S:\Omega\to \scrX^\bbN$ denote the random formal series 
	\begin{equation}
	\Sigma(\omega)  = \sum_{n=0}^\infty \zeta_{f(n)}(\omega) x_n \quad \text{ and }\quad S(\omega) = \sum_{n=0}^\infty \chi_{f(n)}(\omega) x_n,
	\end{equation}
	where $\chi_n = 2^{-1}(1-\zeta_n)$, 
	the following are  equivalent: ($*$) $\Sigma \in \mathtt{P}$ for $\bbP$-almost all $\omega\in \Omega$ \emph{and} $\sum_{n=0}^\infty x_n \in \mathtt{P}$, ($**$) $S \in \mathtt{P}$ for $\bbP$-almost all $\omega\in \Omega$. 
	Consequently, if $\mathtt{P}\in \calT$, by \Cref{prop:kol} the following are equivalent: ($*'$) $\Sigma \not\in \mathtt{P}$ for $\bbP$-almost all $\omega\in \Omega$ \emph{or} $\sum_{n=0}^\infty x_n \not\in \mathtt{P}$ and ($**'$) $S \not\in \mathtt{P}$ for $\bbP$-almost all $\omega\in \Omega$. 
\end{proposition}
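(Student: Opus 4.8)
The plan is to push everything through the deterministic, coordinatewise identities in $\scrX^\bbN$ that mirror \cref{eq:misc_j21} and \cref{eq:misc_j22}. Writing $\chi_n = 2^{-1}(1-\zeta_n)$ and introducing the complementary coefficients $\chi'_n = 1-\chi_n = 2^{-1}(1+\zeta_n)$, set $S'(\omega) = \sum_{n=0}^\infty \chi'_{f(n)}(\omega)x_n$. Then for every $\omega\in\Omega$ one has, as elements of $\scrX^\bbN$,
\begin{equation*}
	S(\omega)+S'(\omega) = \sum_{n=0}^\infty x_n \qquad\text{and}\qquad \Sigma(\omega) = \sum_{n=0}^\infty x_n - 2S(\omega),
\end{equation*}
the first because the $n$th coordinate of the left-hand side is $(\chi_{f(n)}+\chi'_{f(n)})x_n = x_n$, the second because $\zeta_{f(n)} = 1-2\chi_{f(n)}$. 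Since $\mathtt{P}$ is a $\bbK$-subspace it is closed under the finite linear combinations appearing here, and I shall use this repeatedly.

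The implication $(*)\Rightarrow(**)$ needs no probability: if $\sum_{n=0}^\infty x_n\in\mathtt{P}$ and $\Sigma(\omega)\in\mathtt{P}$ for all $\omega$ in some $F\in\calF$ with $\bbP(F)=1$, then for those $\omega$ the identity $S(\omega) = 2^{-1}\sum_{n=0}^\infty x_n - 2^{-1}\Sigma(\omega)$ exhibits $S(\omega)$ as a $\bbK$-combination of two elements of $\mathtt{P}$, so the same $F$ witnesses $(**)$ in the sense of \cref{eq:fm}. The \emph{random} half of the converse $(**)\Rightarrow(*)$ is equally cheap once the \emph{deterministic} statement $\sum_{n=0}^\infty x_n\in\mathtt{P}$ is known: on the full-measure set $F$ witnessing $(**)$ for $S$, the identity $\Sigma = \sum_{n=0}^\infty x_n - 2S$ gives $\Sigma(\omega)\in\mathtt{P}$, so $\Sigma\in\mathtt{P}$ for $\bbP$-almost all $\omega$. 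Thus the whole weight of $(**)\Rightarrow(*)$ falls on extracting $\sum_{n=0}^\infty x_n\in\mathtt{P}$ from $(**)$.

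For this I would invoke symmetry. Because the $\zeta_n$ are independent and symmetric, coordinatewise negation of $(\zeta_n)_n$ preserves their joint law, and negation interchanges $\chi_n$ with $\chi'_n$; hence $S'$ is equidistributed with $S$, so (heuristically) $S'\in\mathtt{P}$ for $\bbP$-almost all $\omega$ as well, say on a full-measure set $F'$, and on $F\cap F'$ (which has probability one, hence is nonempty) both $S(\omega)$ and $S'(\omega)$ lie in $\mathtt{P}$, giving $\sum_{n=0}^\infty x_n = S(\omega)+S'(\omega)\in\mathtt{P}$. The step I expect to fight with is precisely ``$S'$ equidistributed with $S$, therefore $S'\in\mathtt{P}$ almost surely'': equality of laws transfers an almost-sure membership only for a \emph{measurable} target, whereas $\mathtt{P}$ is an arbitrary subspace and \cref{eq:fm} deliberately does not require $S^{-1}(\mathtt{P})\in\calF$. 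When $\mathtt{P}\in\calT$—the only case needed for the applications—this evaporates: by \Cref{lem:meas} the events $\{S\in\mathtt{P}\}$ and $\{S'\in\mathtt{P}\}$ then lie in $\calF$, and equidistribution gives $\bbP(S'\in\mathtt{P})=\bbP(S\in\mathtt{P})=1$ outright. In general I would realize the symmetry as an honest measure-preserving map rather than a mere coincidence of laws, reducing to the canonical coordinate model $(\bbK^\bbN,\operatorname{Borel}(\bbK^\bbN),\mu)$, with $\mu$ the law of $(\zeta_n)_n$, on which coordinatewise negation is a measure-preserving Borel involution carrying $S$ to $S'$; a full-measure Borel set inside $\{S\in\mathtt{P}\}$ is then mapped to one inside $\{S'\in\mathtt{P}\}$, and the bookkeeping transferring the almost-sure hypotheses to and from this model (via inner/outer measure) is routine. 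This is the only place the hypothesis that the $\zeta_n$ are symmetric enters.

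Finally, the ``Consequently'' clause is a formal contrapositive. For $\mathtt{P}\in\calT$, \Cref{prop:kol} forces $\bbP(\Sigma\in\mathtt{P}),\bbP(S\in\mathtt{P})\in\{0,1\}$, so ``not ($\Sigma\in\mathtt{P}$ a.s.)'' coincides with ``$\Sigma\notin\mathtt{P}$ a.s.,'' and likewise for $S$. Hence $(*')$ is exactly the negation of $(*)$ and $(**')$ exactly the negation of $(**)$, and $(*')\Leftrightarrow(**')$ is the contrapositive of the equivalence $(*)\Leftrightarrow(**)$ already established.
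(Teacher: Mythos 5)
Your proposal follows the same route as the paper's own proof: the identical decomposition identities ($S+S'=\sum_{n}x_n$ and $\Sigma=\sum_n x_n-2S$), the same symmetrization (comparing $S$ with $S'$, equidistributed because $(\zeta_n)_{n}$ and $(-\zeta_n)_n$ have the same law), and the same extraction of $\sum_n x_n\in\mathtt{P}$ on the nonempty intersection of two full-measure sets. For $\mathtt{P}\in\calT$ --- indeed for any Borel $\mathtt{P}$ --- your argument is complete and correct: the events $\{S\in\mathtt{P}\}$ and $\{S'\in\mathtt{P}\}$ are then genuinely in $\calF$, equidistribution gives $\bbP[S'\in\mathtt{P}]=\bbP[S\in\mathtt{P}]=1$, and the rest is the deterministic algebra you wrote. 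That case covers the ``Consequently'' clause and every application of \Cref{prop:eq} made in the paper. You also correctly isolated the one delicate step, which the paper dispatches with ``Clearly\dots\ we deduce.''

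The genuine gap is your claim that, for arbitrary (non-measurable) subspaces $\mathtt{P}$, the reduction to the canonical coordinate model is ``routine bookkeeping via inner/outer measure.'' It is not routine; it is impossible. Almost-sure membership in the sense of \cref{eq:fm} --- existence of $F\in\calF$ with $\bbP(F)=1$ and $F\subseteq S^{-1}(\mathtt{P})$ --- is an inner-measure statement about the particular space $(\Omega,\calF,\bbP)$ and is \emph{not} determined by the law of $S$, so the hypothesis ($**$) cannot be transported to the canonical model where negation acts as a point map. In fact the proposition as literally stated is false for non-measurable $\mathtt{P}$. Take $\scrX=\bbR$, $x_n=1$, $f=\mathrm{id}$, $\zeta_n$ Rademacher. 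Since a $d$-dimensional subspace of $\bbR^\bbN$ contains at most $2^d$ points of $\{0,1\}^\bbN$ (hence any subspace of Hamel dimension $<\mathfrak{c}$ contains $<\mathfrak{c}$ such points), a transfinite induction over the closed sets of positive product measure produces $A\subseteq\{0,1\}^\bbN$ meeting every such closed set and satisfying $\mathbf{1}=(1,1,1,\dots)\notin\operatorname{span}_{\bbR}(A)$. Then $A$ has outer measure one; let $\Omega=A$ with the trace $\sigma$-algebra and trace measure, let $\chi_n$ be the coordinate maps (which are i.i.d.\ Bernoulli under this measure), and let $\mathtt{P}=\operatorname{span}_{\bbR}(A)$. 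Now $S(\omega)=\omega\in\mathtt{P}$ for \emph{every} $\omega\in\Omega$, so ($**$) holds with $F=\Omega$, yet $\sum_n x_n=\mathbf{1}\notin\mathtt{P}$, so ($*$) fails. Note that this example defeats the paper's own proof at exactly the step you flagged: here $S'(\omega)=(1-\chi_n(\omega))_n$ lies in $\mathtt{P}$ for \emph{no} $\omega$ (since $\omega+S'(\omega)=\mathbf{1}$), so equidistribution alone cannot yield ``$S'\in\mathtt{P}$ almost surely.'' The proposition and both proofs should therefore be read with $\mathtt{P}$ Borel (e.g.\ $\mathtt{P}\in\calT$), which is all the paper ever uses; with that restriction your write-up is a correct, and more careful, rendering of the paper's argument.
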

This is essentially an immediate consequence of \cref{eq:misc_j21}, \cref{eq:misc_j22}, \emph{mutatis mutandis}.
\begin{proof}
	First suppose that ($*$) holds. In particular, $\sum_{n=0}^\infty x_n \in \mathtt{P}$. Then, since $\mathtt{P}$ is a subspace of $\scrX^\bbN$, 
	\begin{equation}
	\sum_{n=0}^\infty \chi_{f(n)}(\omega) x_n = -\frac{1}{2}\sum_{n=0}^\infty \zeta_{f(n)}(\omega) x_n + \frac{1}{2} \sum_{n=0}^\infty x_n 
	\end{equation}
	is in $\mathtt{P}$ if $\sum_{n=0}^\infty \zeta_n(\omega) x_n$ is. By assumption, this holds for $\bbP$-almost all $\omega\in\Omega$, and so we conclude that ($**$) holds.
	
	Conversely, suppose that ($**$) holds, so that $S(\omega)\in \mathtt{P}$ for all $\omega$ in some some subset $F\in \calF$ with $\bbP(F)=1$. Clearly, the two formal series $S,S':\Omega\to \scrX^\bbN$, 
	\begin{equation}
	S(\omega) = \sum_{n=0}^\infty \chi_{f(n)}(\omega)x_n \quad \text{ and } \quad S'(\omega) = \sum_{n=0}^\infty (1-\chi_{f(n)}(\omega)) x_n 
	\end{equation}
	are equidistributed. We deduce that $S'(\omega)\in \mathtt{P}$ for almost all $\omega\in \Omega$, i.e.\ that there exists some $F'\in \calF$ with $\bbP(F')= 1$ such that $S'(\omega)\in \mathtt{P}$ whenever $\omega\in F'$. This implies, since $\mathtt{P}$ is a subspace of $\scrX^\bbN$, that the random formal series 
	\begin{align}
	S(\omega)+S'(\omega)  &= \sum_{n=0}^\infty x_n  \\ S(\omega) - S'(\omega) &= -\sum_{n=0}^\infty \zeta_{f(n)}(\omega) x_n 
	\end{align}
	are both in $\mathtt{P}$ for all $\omega\in F\cap F'$. Since $\bbP(F\cap F') = 1$, it is the case that $F\cap F' \neq \varnothing$, and so we conclude that $\sum_{n=0}^\infty x_n \in \mathtt{P}$. Likewise, $\sum_{n=0}^\infty \zeta_{f(n)}(\omega) x_n \in \mathtt{P}$ for almost all $\omega\in \Omega$. 
\end{proof}
\Cref{prop:eq} applies in particular to the sets $\mathtt{P}_{\mathrm{I}},\mathtt{P}_{\mathrm{II'}},\mathtt{P}_{\mathrm{III}}$. We will not discuss $\mathtt{P}_{\mathrm{III}}$ further, but the preceding results are useful for the treatment of the J{\o}rgensen--Kwapie\'n and Bessaga--Pe{\l}czy\'nski theorems along the lines of \S\ref{sec:main}.

\section{Proof of It\^o--Nisio}
\label{sec:IN}

Let $\scrX$ be a separable Banach space over $\bbK \in \{\bbR,\bbC\}$. We now give a treatment, 
via the method in \cite{ABS}, of the particular variant of the It\^o--Nisio theorem stated in \Cref{thm:IN}.

The key result allowing the generalization from the weak topology to all admissible topologies is:
\begin{proposition}	\label{prop:con}
	If $\tau$ is an admissible topology on $\scrX$, then $\operatorname{Borel}(\scrX) = \operatorname{Borel}(\scrX_\tau)$. 
\end{proposition}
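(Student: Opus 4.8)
The plan is to prove the two inclusions separately, the only substantial one being $\operatorname{Borel}(\scrX) \subseteq \operatorname{Borel}(\scrX_\tau)$. The reverse inclusion is immediate: since $\tau$ is, by admissibility, identical to or weaker than the norm topology, every $\tau$-open set is norm-open and hence norm-Borel; as the $\tau$-open sets generate $\operatorname{Borel}(\scrX_\tau)$, this yields $\operatorname{Borel}(\scrX_\tau) \subseteq \operatorname{Borel}(\scrX)$ at once.

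For the forward inclusion, the key observation I would establish first is that \emph{every norm-closed ball is $\tau$-closed}. Admissibility gives that the norm-closed unit ball $\bbB$ is $\tau$-closed, and since $\tau$ is a vector topology the affine map $y \mapsto x + ry$ is, for each fixed $x \in \scrX$ and each real $r > 0$, a $\tau$-homeomorphism (translation composed with nonzero scalar multiplication). Hence $x + r\bbB = \{y : \lVert y - x \rVert \le r\}$ is $\tau$-closed for every $x$ and every $r > 0$; the degenerate case $r = 0$ is covered by the Hausdorffness built into our convention for LCTVS. Consequently, for $r > 0$ the open norm-ball $\{y : \lVert y - x \rVert < r\}$ equals the countable union $\bigcup_{n : 1/n < r} \{y : \lVert y - x \rVert \le r - 1/n\}$ of $\tau$-closed sets, and so lies in $\operatorname{Borel}(\scrX_\tau)$.

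Finally I would invoke separability. As a separable metric space, $\scrX$ is second countable in the norm topology, so it admits a countable base of open balls $\{y : \lVert y - x_i \rVert < q\}$ with $x_i$ ranging over a fixed countable dense subset and $q$ over the positive rationals. Every norm-open set is therefore a countable union of basic balls, and hence lies in $\operatorname{Borel}(\scrX_\tau)$ by the previous step. Since the norm-open sets generate $\operatorname{Borel}(\scrX)$, this gives $\operatorname{Borel}(\scrX) \subseteq \operatorname{Borel}(\scrX_\tau)$ and completes the proof.

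I do not anticipate a serious obstacle; the two points needing care are the use of the vector-space structure to propagate $\tau$-closedness from $\bbB$ to all closed balls, and the appeal to second countability (via separability) to express an arbitrary norm-open set as a \emph{countable} union of balls, which is exactly what keeps everything inside a $\sigma$-algebra. I would emphasize that this argument works directly for any admissible $\tau$ and does not require first reducing to a countable norming family of functionals as in \Cref{rem:count}.
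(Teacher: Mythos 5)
Your proposal is correct and follows essentially the same route as the paper's proof: both inclusions are handled identically, with the forward inclusion obtained by propagating the $\tau$-closedness of $\bbB$ to all norm-closed balls via the vector-topology structure, writing open balls as countable unions of closed balls, and using separability to ensure the (closed) balls generate $\operatorname{Borel}(\scrX)$. The only cosmetic difference is that the paper phrases the last step as ``the closed balls generate $\operatorname{Borel}(\scrX)$'' while you unwind it into explicit countable unions over a countable base; the content is the same.
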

\begin{proof}
	The inclusion $\operatorname{Borel}(\scrX) \supseteq \operatorname{Borel}(\scrX_\tau)$ is an immediate consequence of the assumption that $\tau$ is weaker than or identical to the norm topology, so it suffices to prove that $\operatorname{Borel}(\scrX_\tau)$ contains a collection of sets that generate $\operatorname{Borel}(\scrX)$ as a $\sigma$-algebra. Consider the collection 
	\begin{equation}
		\calB=\{x+\lambda \bbB : x\in \scrX, \lambda \in \bbR^{\geq 0}\} \subseteq\operatorname{Borel}(\scrX)
	\end{equation}
	of all norm-closed balls in $\scrX$. 
	Since $\scrX$ is separable, the collection of all open balls generates $\operatorname{Borel}(\scrX)$, and each open ball $x+\lambda \bbB^\circ$, $x\in \scrX,\lambda >0$, is a countable union 
	\begin{equation} 
		x+\lambda \bbB^\circ = \bigcup_{N\in \bbN, 1/N<\lambda} (x+(\lambda-1/N) \bbB)
	\end{equation} 
	of closed balls, so the closed balls generate $\operatorname{Borel}(\scrX)$. 
	Since $\tau$ is an LCTVS topology, once we know that $\bbB$ is $\tau$-closed, the same holds for all other norm-closed balls.  
	Because $\tau$ is admissible, the elements of $\calB$ are $\tau$-closed, so $\calB\subseteq\operatorname{Borel}(\scrX_\tau)$. 
\end{proof}

Suppose now that $\tau$ is admissible, and suppose that $(\Omega,\calF,\bbP)$ is a probability space on which symmetric, independent random variables $\gamma_0,\gamma_1,\gamma_2,\cdots:\Omega\to \bbK$ are defined. 
\begin{proposition} \label{prop:rv}
	 Suppose that $\sum_{n=0}^\infty \gamma_n(\omega) x_n$ converges in $\scrX_\tau$ for $\bbP$-almost all $\omega\in \Omega$, so that we may find some $F\in \calF$ with $\bbP(F)=1$ such that 
	\begin{equation}
		\Sigma_\infty(\omega) = \tau\!-\!\!\!\lim_{N\to\infty} \sum_{n=0}^N \gamma_n(\omega) x_n
		\label{eq:cnv}
	\end{equation}
	exists
	for all $\omega\in F$. Set $\Sigma_\infty(\omega)=0$ for all $\omega \in \Omega\backslash F$. Then, $\Sigma_\infty$ is a well-defined $\scrX$-valued random variable.
\end{proposition}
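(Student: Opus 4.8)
The plan is to reduce the measurability question to an equality of Borel $\sigma$-algebras together with a countable family of scalar pointwise limits. First I would record that each partial sum $\Sigma_N(\omega) = \sum_{n=0}^N \gamma_n(\omega) x_n$ is a Borel-measurable map $\Omega \to \scrX$: it is the composition of the measurable map $\omega \mapsto (\gamma_0(\omega),\dots,\gamma_N(\omega))$ into $\bbK^{N+1}$ with the norm-continuous map $(c_0,\dots,c_N)\mapsto \sum_{n=0}^N c_n x_n$. By \Cref{prop:con}, $\operatorname{Borel}(\scrX)=\operatorname{Borel}(\scrX_\tau)$, so it is harmless to speak of Borel measurability without reference to the topology; in particular each $\Sigma_N$ is measurable into $\scrX_\tau$ as well.

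Next, using that $\scrX$ is separable and $\tau$ admissible, I would invoke \Cref{rem:count} to fix a countable norming subset $\calS = \{\Lambda_0,\Lambda_1,\Lambda_2,\dots\} \subseteq \scrX^*_\tau$, for which the coarser topology $\sigma(\scrX,\calS)$ is again admissible. The point of passing to $\calS$ is that for each fixed $\Lambda \in \calS$ the scalar functions $\Lambda\circ \Sigma_N:\Omega\to\bbK$ are measurable, and on $F$ the $\tau$-continuity of $\Lambda$ gives $\Lambda(\Sigma_\infty(\omega)) = \lim_{N\to\infty}\Lambda(\Sigma_N(\omega))$. A pointwise limit of scalar measurable functions is measurable (splitting into real and imaginary parts if $\bbK=\bbC$, and taking $\limsup$), and off $F$ we have $\Lambda\circ\Sigma_\infty \equiv \Lambda(0)=0$; since $F\in\calF$, the function $\omega\mapsto \Lambda(\Sigma_\infty(\omega))$ is measurable on all of $\Omega$. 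Hence $\Sigma_\infty$ is measurable with respect to $\sigma(\calS)$, the $\sigma$-algebra on $\scrX$ generated by the functionals in $\calS$.

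It then remains to identify $\sigma(\calS)$ with $\operatorname{Borel}(\scrX)$. Because $\calS$ is countable, the topology $\sigma(\scrX,\calS)$ admits a countable subbasis (preimages under the $\Lambda_i$ of a countable basis of $\bbK$), so it is second countable; consequently $\operatorname{Borel}(\scrX_{\sigma(\scrX,\calS)})=\sigma(\calS)$. Applying \Cref{prop:con} to the admissible topology $\sigma(\scrX,\calS)$ gives $\operatorname{Borel}(\scrX_{\sigma(\scrX,\calS)})=\operatorname{Borel}(\scrX)$, whence $\sigma(\calS)=\operatorname{Borel}(\scrX)$ and $\Sigma_\infty$ is a genuine $\scrX$-valued random variable.

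I expect the main obstacle to be exactly this last bridge: $\tau$-convergence of the partial sums does not by itself make the limit Borel measurable, since $\scrX_\tau$ need not be metrizable, and one cannot naively appeal to ``a pointwise limit of measurable functions is measurable'' in a non-metrizable target. The countable norming set circumvents this by replacing $\tau$ with the second-countable (hence metrizable) topology $\sigma(\scrX,\calS)$ having the same Borel structure, and \Cref{prop:con} is precisely what guarantees that this coarsening does not enlarge or shrink the measurable sets. Everything else is routine.
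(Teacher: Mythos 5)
Your proof is correct and follows the same basic strategy as the paper's --- reduce measurability of $\Sigma_\infty$ to measurability of the scalar functions $\Lambda\circ\Sigma_\infty$, handle the exceptional set by the $1_F$-truncation and pointwise limits, and invoke \Cref{prop:con} to match Borel $\sigma$-algebras --- but the two differ in which family of functionals is used, and the difference has real content. The paper tests against every $\Lambda\in\scrX_\tau^*$, relying on the chain $\operatorname{Borel}(\scrX)=\operatorname{Borel}(\scrX_\tau)=\operatorname{Borel}(\sigma(\scrX,\scrX_\tau^*))=\sigma(\scrX_\tau^*)$ (via \Cref{prop:con} and \Cref{lem:weak}); the last equality, identifying the Borel $\sigma$-algebra of the $\tau$-weak topology with the $\sigma$-algebra generated by the typically uncountable family $\scrX_\tau^*$, is asserted without further argument, and its nontrivial inclusion genuinely needs separability (every set in $\sigma(\scrX_\tau^*)$ is determined by only countably many functionals, so the equality can fail in the nonseparable setting). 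You instead test against a countable norming set $\calS$ from \Cref{lem:countable_norming}, observe that $\sigma(\scrX,\calS)$ is admissible (\Cref{lem:S_admissible}) and second countable, so that $\operatorname{Borel}(\scrX_{\sigma(\scrX,\calS)})=\sigma(\calS)$ is immediate, and then apply \Cref{prop:con} to that topology to conclude $\sigma(\calS)=\operatorname{Borel}(\scrX)$. This is in effect the standard justification of the very equality the paper takes for granted, so your write-up is the more self-contained of the two; what the paper's formulation buys is brevity. One cosmetic remark: your parenthetical ``(hence metrizable)'' is true (a Hausdorff topology generated by countably many seminorms is metrizable, and Hausdorffness holds because a norming set separates points) but unnecessary --- your argument uses only scalar pointwise limits and second countability, never metrizability of the target.
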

\begin{proof}
	We want to prove that $\Sigma_\infty$ is measurable with respect to $\calF$ and $\operatorname{Borel}(\scrX)$. By \Cref{prop:con} and \Cref{lem:weak}, $\operatorname{Borel}(\scrX) = \operatorname{Borel}(\scrX_\tau)= \operatorname{Borel}(\sigma(\scrX,\scrX_\tau^*)) = \sigma(\scrX_\tau^*)$, so it suffices to check that $\Lambda\circ \Sigma_\infty$ is a measurable $\bbK$-valued function for each $\Lambda \in \scrX_\tau^*$. 
	Certainly, 
	\begin{equation}
		\Lambda\circ \tilde{\Sigma}_N(\omega) = 1_{\omega\in F}\Lambda\circ  \Sigma_N(\omega) =
		\begin{cases}
			\Sigma_N(\omega) & (\omega\in F) \\ 
			0 & (\omega \in \Omega\backslash F)
		\end{cases}
	\end{equation}
	is measurable. Consequently, $\Lambda\circ \Sigma_\infty = \lim_{N\to\infty} \Lambda\circ \tilde{\Sigma}_N$ is the limit of measurable $\bbK$-valued random variables and, therefore, measurable. 
\end{proof}

\begin{proposition}	\label{prop:equid}
	Consider the setup of \Cref{prop:rv}. For each $N\in \bbN$, the $\scrX$-valued random variables $\Sigma_\infty$ and $\Sigma_\infty - 2 \Sigma_N$ are equidistributed. 
\end{proposition}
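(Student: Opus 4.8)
The plan is to reduce the equidistribution of the two $\scrX$-valued random variables to a purely scalar statement about sign flips, which then follows from the independence and symmetry of the $\gamma_n$. First I would reduce to finite-dimensional distributions. By \Cref{prop:con} and \Cref{lem:weak} we have $\operatorname{Borel}(\scrX) = \sigma(\scrX_\tau^*)$, so the sets $\bigcap_{j=1}^k \Lambda_j^{-1}(A_j)$ with $k\in \bbN$, $\Lambda_1,\dots,\Lambda_k \in \scrX_\tau^*$, and $A_1,\dots,A_k \in \operatorname{Borel}(\bbK)$ form a $\pi$-system generating $\operatorname{Borel}(\scrX)$. By the $\pi$-$\lambda$ theorem it therefore suffices to show, for every finite tuple $\Lambda_1,\dots,\Lambda_k \in \scrX_\tau^*$, that the $\bbK^k$-valued random vectors $(\Lambda_j \Sigma_\infty)_{j=1}^k$ and $(\Lambda_j(\Sigma_\infty - 2\Sigma_N))_{j=1}^k$ are equidistributed.

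Fix such a tuple and set $a_{j,n} = \Lambda_j(x_n) \in \bbK$. Since each $\Lambda_j$ is $\tau$-continuous, on the probability-one event $F$ of \Cref{prop:rv} we have $\Lambda_j \Sigma_\infty = \lim_{M\to\infty}\sum_{n=0}^M \gamma_n a_{j,n}$, whereas $\Lambda_j \Sigma_N = \sum_{n=0}^N \gamma_n a_{j,n}$ holds identically. The key algebraic observation is that, for $M \geq N$,
\[
	\sum_{n=0}^M \gamma_n a_{j,n} - 2\sum_{n=0}^N \gamma_n a_{j,n} = \sum_{n=0}^M \gamma'_n a_{j,n}, \qquad \gamma'_n = \begin{cases} -\gamma_n & (n\leq N) \\ \gamma_n & (n > N), \end{cases}
\]
so, letting $M\to\infty$, on $F$ the limit $\lim_{M\to\infty}\sum_{n=0}^M \gamma'_n a_{j,n}$ exists and equals $\Lambda_j(\Sigma_\infty - 2\Sigma_N)$. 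Now introduce the map $H = (h_1,\dots,h_k) : \bbK^\bbN \to \bbK^k$ with $h_j(\{c_n\}_{n=0}^\infty) = \lim_{M\to\infty}\sum_{n=0}^M c_n a_{j,n}$ where that limit exists and $h_j = 0$ otherwise; each $h_j$ is the pointwise limit, on a measurable convergence set, of finite linear combinations of coordinate projections, hence measurable with respect to the product $\sigma$-algebra $\calP$. By the two displays above, both identities hold on all of $F$, and so $(\Lambda_j \Sigma_\infty)_{j=1}^k = H(\{\gamma_n\}_{n=0}^\infty)$ and $(\Lambda_j(\Sigma_\infty - 2\Sigma_N))_{j=1}^k = H(\{\gamma'_n\}_{n=0}^\infty)$, $\bbP$-almost surely.

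It then remains to observe that $\{\gamma_n\}_{n=0}^\infty$ and $\{\gamma'_n\}_{n=0}^\infty$ have the same law on $(\bbK^\bbN,\calP)$. Since the $\gamma_n$ are independent, their joint law is the product of the individual laws; symmetry gives $\gamma'_n \stackrel{d}{=} \gamma_n$ for each $n$ (flipping $\gamma_n \mapsto -\gamma_n$ for $n\leq N$ is law-preserving), so the two product measures agree on every finite-dimensional cylinder and therefore, by a further $\pi$-$\lambda$ argument, on all of $\calP$. Applying the measurable map $H$ yields $H(\{\gamma_n\}_{n=0}^\infty) \stackrel{d}{=} H(\{\gamma'_n\}_{n=0}^\infty)$, which is precisely the equidistribution required. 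I expect the only delicate point to be the bookkeeping around the null set: one must check that the two almost-sure identities for $(\Lambda_j\Sigma_\infty)_j$ and $(\Lambda_j(\Sigma_\infty-2\Sigma_N))_j$ hold simultaneously off a single $\bbP$-null set, but this is immediate here since both hold on all of $F$; the remaining verifications (measurability of $H$ and invariance of the joint law under the sign flip) are routine.
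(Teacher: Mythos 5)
Your proof is correct, but it takes a genuinely different route from the paper's own proof of \Cref{prop:equid}. The paper argues via characteristic functionals: it shows the Fourier transforms $\calF\mu,\calF\lambda_N$ of the two laws agree on $\scrX_\tau^*$ (using that $\Lambda(\Sigma_\infty-\Sigma_N)$ and $\Lambda(\Sigma_N)$ are independent and that $\Lambda(\Sigma_N)$ is symmetric), and then invokes the fact that the law of an $\scrX$-valued random variable is determined by the restriction of its Fourier transform to $\scrX_\tau^*$ --- a fact which itself rests on \Cref{prop:con}, the Dynkin $\pi$-$\lambda$ theorem, and finite-dimensional Fourier uniqueness. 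You instead dispense with Fourier analysis entirely: you realize both $(\Lambda_j\Sigma_\infty)_{j=1}^k$ and $(\Lambda_j(\Sigma_\infty-2\Sigma_N))_{j=1}^k$ almost surely as images, under one fixed measurable map $H$ on $(\bbK^\bbN,\calP)$, of the sequence $\{\gamma_n\}_{n=0}^\infty$ and of its sign-flipped version $\{\gamma_n'\}_{n=0}^\infty$, and then use that flipping finitely many coordinates of an independent symmetric sequence preserves its (product) law. This is close in spirit to the alternative argument the paper only sketches after its main proof --- the rerandomization via the universal summation map $\Sigma_{\infty,\mathrm{Uni}}:\scrX^\bbN\to\scrX$ --- but you carry it out at the scalar level with finite tuples of functionals, so you need neither the Fourier uniqueness input nor the countable-norming-set measurability machinery of \S\ref{sec:preliminaries}; only \Cref{prop:con}, \Cref{lem:weak}, and the $\pi$-$\lambda$ theorem. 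What each approach buys: the paper's proof is shorter modulo the cited uniqueness fact and follows the standard textbook pattern, while yours is more self-contained and elementary, at the cost of the explicit bookkeeping with $H$ and the $\pi$-system of sets $\bigcap_{j=1}^k\Lambda_j^{-1}(A_j)$ --- bookkeeping which you do handle correctly, including the point that both almost-sure identities hold on the single event $F$ from \Cref{prop:rv}.
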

\begin{proof}
	Denote the laws $\Sigma_\infty,\Sigma_\infty-2\Sigma_N$ by $\mu, \lambda_N:\operatorname{Borel}(\scrX)\to [0,1]$, respectively. The measures $\mu,\lambda_N$ are uniquely determined by their Fourier transforms $\calF \mu,\calF \lambda_N : \scrX^*_\tau \to \bbC$, 
	\begin{equation}
		\calF\mu(\Lambda) = \int_{\Omega} e^{-i \Lambda \Sigma_\infty(\omega)} \dd \bbP(\omega) = \int_{\scrX} e^{-i\Lambda x} \dd \mu(x) ,
	\end{equation}
	where $\calF\lambda_N$ is defined analogously. 
	For each $\Lambda \in \scrX^*_\tau$, $\Lambda (\Sigma_\infty - \Sigma_N)$ and $\Lambda(\Sigma_N)$ are clearly independent, and $\Lambda(\Sigma_N)$ is equidistributed with $-\Lambda(\Sigma_N)$, so 
	\begin{align}
		\begin{split} 
			\calF\mu(\Lambda) = \int_{\Omega} e^{-i \Lambda \Sigma_\infty(\omega)} \dd \bbP(\omega) &= \int_{\Omega} e^{-i \Lambda (\Sigma_\infty(\omega) - \Sigma_N(\omega))} e^{-i \Lambda  \Sigma_N(\omega)} \dd \bbP(\omega) \\
			&=\Big(\int_{\Omega} e^{-i \Lambda (\Sigma_\infty(\omega) - \Sigma_N(\omega))} \dd \bbP(\omega) \Big) \Big(\int_\Omega e^{-i \Lambda  \Sigma_N(\omega)} \dd \bbP(\omega)\Big) \\
			&=\Big(\int_{\Omega} e^{-i \Lambda (\Sigma_\infty(\omega) - \Sigma_N(\omega))} \dd \bbP(\omega) \Big) \Big(\int_\Omega e^{+i \Lambda  \Sigma_N(\omega)} \dd \bbP(\omega)\Big) \\
			&= \int_{\Omega} e^{-i \Lambda (\Sigma_\infty(\omega) - \Sigma_N(\omega))} e^{+i \Lambda  \Sigma_N(\omega)} \dd \bbP(\omega) \\
			&= \int_{\Omega} e^{-i \Lambda (\Sigma_\infty(\omega) - 2\Sigma_N(\omega))}  \dd \bbP(\omega) = \calF \lambda_N(\Lambda).
		\end{split} 
	\end{align}
	Hence the Fourier transforms of $\mu,\lambda_N$ agree, and we conclude that $\Sigma_\infty$ and $\Sigma_\infty-2\Sigma_N$ are equidistributed. 
\end{proof}
The proof is identical to the standard one, except we need to know that the law of an $\scrX$-valued random variable is uniquely determined by the restriction of its Fourier transform (a.k.a.\ ``characteristic functional'') from $\scrX^*$ to $\scrX_\tau^*$, for any admissible $\tau$. 
The proof of this fact for $\tau$ the strong or weak topologies, which is just the proof that a finite Borel measure on $\scrX$ is uniquely determined by the Fourier transform of its law, is given in \cite[E.1.16, E.1.17]{ABS}.
The general statement follows from analogous reasoning: the finite-dimensional version (i.e.\ finite Borel measures on $\bbR^d$ are identifiable with particular tempered distributions, and are, therefore, uniquely determined by their Fourier transforms), the Dynkin $\pi$-$\lambda$ theorem (which implies that a finite measure is uniquely determined by its restriction to any $\pi$-system which generates the $\sigma$-algebra on which the measure is defined \cite[Theorem A.1.5]{Durrett}), and \Cref{prop:con}. 

Another way to prove the proposition is to show that $\Sigma_\infty$ agrees, almost everywhere, with the composition of the random formal series $\sum_{n=0}^\infty \gamma_n(-)x_n:\Omega\to \scrX^\bbN$ and $\Sigma_{\infty,\mathrm{Uni}}:\scrX^\bbN\to \scrX$,
\begin{equation}
	\Sigma_{\infty,\mathrm{Uni}}\Big( \sum_{n=0}^\infty x_n \Big) = 
	\begin{cases}
		\calS\!-\!\lim_{N\to\infty} \sum_{n=0}^N x_n & (\sum_{n=0}^\infty x_n \in \mathtt{P}_{\mathrm{II}'}),  \\
		0 & (\text{otherwise}),
	\end{cases}
\end{equation}
where $\calS\subseteq\scrX_\tau^*$ is a countable norming collection of functionals and $\mathtt{P}_{\mathrm{II}'}$ is as in \S\ref{sec:preliminaries}. By the results in \S\ref{sec:preliminaries}, $\Sigma_{\infty,\mathrm{Uni}}:\scrX^\bbN\to \scrX$ is Borel measurable. Thus, we can form the pushforward under it of the law of the formal series $\sum_{n=0}^\infty \gamma_n(-)x_n$. The initial claim, then, is that the law of $\Sigma_\infty$ is this pushforwards. 
Likewise, the pushforwards of the law of the random formal series
\begin{equation}
	\omega\mapsto -\sum_{n=0}^N \gamma_n(\omega)x_n + \sum_{n=N+1}^\infty \gamma_n(\omega)x_n \in \scrX^\bbN \label{eq:misc_g31}
\end{equation}
is the law of $\Sigma_\infty - 2 \Sigma_N$. 
Since the random formal series \cref{eq:misc_g31} is equidistributed with the original, we deduce that $\Sigma_\infty$ and $\Sigma_{\infty}-2\Sigma_N$ are equidistributed as well.

Recall that an $\scrX$-valued random variable $X:\Omega\to \scrX$ is called \emph{tight} if for every $\varepsilon>0$ there exists a norm-compact set $K\subseteq\scrX$ such that $\bbP[X\notin K] \leq \varepsilon$. By an elementary argument, every $\scrX$-valued random variable is tight \cite[Proposition 6.4.5]{ABS}. 
A family $\calX$ of $\scrX$-valued random variables is called \emph{uniformly tight} if we can choose the same $K=K(\varepsilon)$ for every $X\in \calX$, i.e.\ if for each $\varepsilon>0$ there exists some norm-compact $K\subseteq\scrX$  such that $\bbP[X\notin K] \leq \varepsilon$ holds for all $X\in \calX$. 
If $\calX$ is uniformly tight, then 
\begin{equation} 
\calX-\calX = \{X_1-X_2 : X_1,X_2 \in \calX\}
\label{eq:XdX}
\end{equation} 
is uniformly tight as well, a fact which is used below.  (The map $\Delta:\scrX\times \scrX\to \scrX$ given by $(x,y)\mapsto x-y$ is continuous. If $K\subseteq\scrX$ is compact, then $K\times K$ is a compact subset of $\scrX\times \scrX$. Its image $\Delta(K\times K) = K-K$ under $\Delta$ is, therefore, also compact. By a union bound, 
\begin{equation} 
	\bbP[X_1-X_2 \notin \Delta(K\times K)] \leq \bbP[X_1 \notin K] + \bbP[X_2\notin K].
\end{equation} 
See \cite[Lemma 6.4.6]{ABS}.)

To complete the proof of the It\^o--Nisio theorem, we use L\'evy's maximal inequality \cite[Proposition 6.1.12]{ABS}\footnote{The statement there uses strict inequalities for the events, but the version for nonstrict inequalities follows by the countable additivity of $\bbP$.}:
\begin{propositionp}[L\'evy's maximal inequality]
	Let $\scrX$ be a separable Banach space over $\bbK$. 
	Let $x_0,x_1,x_2,\cdots$ be independent symmetric $\scrX$-valued random variables. Then, setting $\smash{\Sigma_N  = \sum_{n=0}^N x_n}$, 
	\begin{equation}
		\bbP[(\exists N_0 \in \{0,\cdots,N\})\lVert \Sigma_{N_0} \rVert \geq  R] \leq 2 \bbP[\lVert \Sigma_N \rVert \geq R]
	\end{equation}
	for all $N\in \bbN$ and real $R>0$.
\end{propositionp}

\begin{proposition}
		Suppose that $\sum_{n=0}^\infty \gamma_n(\omega) x_n$ converges in $\scrX_\tau$ for $\bbP$-almost all $\omega\in \Omega$, and let $\Sigma_\infty$ denote the $\scrX$-valued random variable constructed in the statement of \Cref{prop:rv}. Then 
		\begin{equation}
			\Sigma_\infty(\omega) = \lim_{N\to\infty} \sum_{n=0}^N \gamma_n(\omega) x_n
			\label{eq:cvs}
		\end{equation}
		for $\bbP$-almost all $\omega\in \Omega$. 
		\label{prop:IN}
\end{proposition}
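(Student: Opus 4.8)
The plan is to upgrade the almost-sure $\tau$-convergence hypothesized here to almost-sure \emph{norm} convergence---this is the substantive direction of It\^o--Nisio---in three stages: first establish uniform tightness, then deduce convergence in norm \emph{in probability}, and finally promote this to almost-sure convergence using L\'evy's maximal inequality. Throughout, write $R_N = \Sigma_\infty - \Sigma_N$, and note that the hypothesis together with \Cref{prop:rv} gives $R_N \to 0$ in $\tau$ for $\bbP$-almost all $\omega$.

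First I would record the tightness input. Since $\Sigma_\infty$ is a single $\scrX$-valued random variable it is tight, and by \Cref{prop:equid} each $\Sigma_\infty - 2\Sigma_N$ is equidistributed with $\Sigma_\infty$, so the \emph{same} compact $K=K(\varepsilon)$ witnesses tightness for every member of $\{\Sigma_\infty\}\cup\{\Sigma_\infty-2\Sigma_N : N\in\bbN\}$; that is, this family is uniformly tight. Writing $R_N = \tfrac12\Sigma_\infty + \tfrac12(\Sigma_\infty - 2\Sigma_N)$ and arguing exactly as in the discussion of $\calX-\calX$ above (the image of $K\times K$ under the continuous map $(x,y)\mapsto\tfrac12 x+\tfrac12 y$ is compact, followed by a union bound), I conclude that $\{R_N : N\in\bbN\}$ is uniformly tight.

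The main obstacle is the middle stage: passing from $\tau$-convergence to norm convergence in probability. The key topological observation is that on any norm-compact set $K$ the norm and $\tau$ topologies coincide, since the identity map $(K,\lVert\cdot\rVert)\to(K,\tau)$ is a continuous bijection from a compact space onto a Hausdorff space and is therefore a homeomorphism. Fixing $\delta>0$ and $\varepsilon>0$, I would use uniform tightness to choose a compact $K\ni 0$ with $\bbP[R_N\notin K]\leq\varepsilon$ for all $N$; then $K\cap\{x:\lVert x\rVert\geq\delta\}$ is norm-closed in $K$, hence $\tau$-compact and so $\tau$-closed in $\scrX$, and it omits $0$, so there is a $\tau$-open $V\ni 0$ disjoint from it. Consequently $\{\lVert R_N\rVert\geq\delta\}\subseteq\{R_N\notin K\}\cup\{R_N\notin V\}$, and since $R_N\to 0$ in $\tau$ almost surely we have $\bbP[R_N\notin V]\to 0$; thus $\limsup_N\bbP[\lVert R_N\rVert\geq\delta]\leq\varepsilon$, and letting $\varepsilon\downarrow 0$ shows $R_N\to 0$ in norm in probability, i.e.\ $\Sigma_N\to\Sigma_\infty$ in norm in probability.

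Finally I would invoke L\'evy's maximal inequality applied to the independent symmetric summands $\gamma_{N+1}x_{N+1},\dots,\gamma_{M'}x_{M'}$ to get $\bbP[\max_{N<M\leq M'}\lVert\Sigma_M-\Sigma_N\rVert\geq R]\leq 2\,\bbP[\lVert\Sigma_{M'}-\Sigma_N\rVert\geq R]$. Letting $M'\to\infty$ and writing $\Sigma_{M'}-\Sigma_N=(\Sigma_{M'}-\Sigma_\infty)+R_N$, where $\Sigma_{M'}\to\Sigma_\infty$ in probability by the previous stage, yields $\bbP[\sup_{M>N}\lVert\Sigma_M-\Sigma_N\rVert\geq R]\leq 2\,\bbP[\lVert R_N\rVert\geq R/2]$, which tends to $0$ as $N\to\infty$ for each $R>0$. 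Hence the nonincreasing quantities $D_N=\sup_{M,M'\geq N}\lVert\Sigma_M-\Sigma_{M'}\rVert$ converge almost surely to a limit that also vanishes in probability, forcing that limit to be $0$; so $\{\Sigma_N\}$ is almost surely norm-Cauchy, hence norm-convergent, and because norm convergence implies $\tau$-convergence the norm limit must agree almost surely with the $\tau$-limit $\Sigma_\infty$, giving \cref{eq:cvs}.
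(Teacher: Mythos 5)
Your proof is correct, and its overall skeleton matches the paper's: both arguments combine (a) uniform tightness of $\{\Sigma_\infty - \Sigma_N\}_{N}$ obtained from \Cref{prop:equid} plus the tightness of the single variable $\Sigma_\infty$, (b) a compactness step converting almost-sure $\tau$-convergence into norm convergence in probability, and (c) L\'evy's maximal inequality to upgrade convergence in probability to almost-sure norm convergence (your monotone quantities $D_N$ are just a repackaging of the paper's ``fails to be Cauchy'' event). The genuine difference is in step (b). The paper argues by contradiction: it extracts a subsequence with $\bbP[\lVert \Sigma_\infty - \Sigma_{N_k}\rVert > \varepsilon]\geq\delta$, intersects with a compact $K_0$ to get a positive-probability ``infinitely often'' event, and then on that event uses \emph{sequential} compactness to extract a further ($\omega$-dependent) subsequence converging in norm to some $\Delta(\omega)\neq 0$, contradicting uniqueness of $\tau$-limits in a Hausdorff topology. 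You instead argue directly: since a continuous bijection from a compact space to a Hausdorff space is a homeomorphism, the norm and $\tau$ topologies coincide on any norm-compact $K$, so $K\cap\{x:\lVert x\rVert\geq\delta\}$ is $\tau$-closed and omits $0$, and the resulting $\tau$-open separating neighborhood $V$ yields the quantitative bound $\limsup_N \bbP[\lVert \Sigma_\infty-\Sigma_N\rVert\geq\delta]\leq\varepsilon$ for the full sequence. Your route avoids the double subsequence extraction and the ``i.o.''\ probability computation, and it isolates the topological mechanism cleanly; note also that measurability of $\{\Sigma_\infty - \Sigma_N \notin V\}$ is unproblematic even without \Cref{prop:con}, since $\tau$-open sets are norm-open. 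The paper's route, following \cite{ABS}, needs nothing beyond sequential compactness and Hausdorffness of $\tau$, at the cost of an indirect contradiction argument. Both are sound; the choice is one of taste.
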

The limit here is taken in the strong topology.
\begin{proof}
	The proof is split into three parts. 
	We first show that it suffices to show that $\Sigma_N\to \Sigma_\infty$ in probability,  where $\smash{\Sigma_N=\sum_{n=0}^N \gamma_n(\omega)x_n}$, i.e.\ that 
	\begin{equation}
		\lim_{N\to\infty} \bbP[\lVert \Sigma_\infty - \Sigma_N \rVert> \varepsilon] = 0
	\end{equation}
	for all $\varepsilon>0$. This part of the argument uses L\'evy's inequality. 
	We then establish (via a standard trick) the uniform tightness of $\{\Sigma_N\}_{N=0}^\infty$.  The third step involves showing that, if $\Sigma_N$ fails to converge to $\Sigma_\infty$ in probability, then, with positive probability, $\Sigma_N$ fails to converge to $\Sigma_\infty$ in $\scrX_\tau$. Under our assumption to the contrary, we can then conclude that $\Sigma_N\to \Sigma_\infty$ in probability, which by the first part of the argument completes the proof of the proposition. 
	\begin{enumerate}
		\item Suppose that $\lim_{N\to\infty} \bbP[\lVert \Sigma_\infty - \Sigma_N \rVert> \varepsilon] = 0$ for all $\varepsilon>0$. We want to prove that $\Sigma_N\to \Sigma_\infty$ $\bbP$-almost surely. It suffices to prove that $\{\Sigma_N\}_{N=0}^\infty$ is $\bbP$-almost surely Cauchy, since then by the completeness of $\scrX$ it converges strongly $\bbP$-almost surely  to some random limit $\Sigma_\infty':\Omega\to \scrX$. Since the $\tau$ topology is weaker than (or identical to) the strong topology and Hausdorff, $\Sigma_\infty'=\Sigma_\infty$ $\bbP$-almost surely. 
		
		By the triangle inequality, for any $M,M',N \in \bbN$, $\lVert \Sigma_{M}- \Sigma_{M'} \rVert \leq \lVert \Sigma_{M}- \Sigma_{N} \rVert+\lVert \Sigma_{M'}- \Sigma_{N} \rVert$. Therefore, by a union bound, 
		\begin{equation}
			\bbP\Big[\bigcup_{ M,M' \geq N} \lVert \Sigma_{M}- \Sigma_{M'} \rVert  \geq \varepsilon \Big] \leq 2\bbP\Big[\bigcup_{ M \geq N} \lVert \Sigma_{M}- \Sigma_{N} \rVert  \geq \varepsilon/2\Big].
		\end{equation}
		By the countable additivity of $\bbP$ and by L\'evy's maximal inequality, 
		\begin{align}
			2\bbP\Big[\bigcup_{ M \geq N} \lVert \Sigma_{M}- \Sigma_{N} \rVert  \geq \varepsilon/2\Big] &= \lim_{N'\to \infty} 2\bbP\Big[\bigcup_{N'\geq M \geq N} \lVert \Sigma_{M}- \Sigma_{N} \rVert  \geq \varepsilon/2\Big] \\
			&\leq  \lim_{N'\to \infty} 4\bbP\Big[\lVert \Sigma_{N'}- \Sigma_{N} \rVert  \geq \varepsilon/2\Big].
		\end{align}
		Consequently, 
		\begin{align}
				\label{eq:nca}
				\begin{split} 
			 	\bbP\Big[\bigcup_{\varepsilon>0} \bigcap_{N=0}^\infty \bigcup_{ M,M' \geq N} \lVert \Sigma_{M}- \Sigma_{M'} \rVert  \geq \varepsilon \Big]  &= \lim_{\varepsilon\to 0^+} \lim_{N\to\infty} 	\bbP\Big[\bigcup_{ M,M' \geq N} \lVert \Sigma_{M}- \Sigma_{M'} \rVert  \geq \varepsilon \Big] \\
			 	&\leq 4  \lim_{\varepsilon\to 0^+} \lim_{N\to\infty} \lim_{N'\to\infty} \bbP[\lVert \Sigma_{N'}- \Sigma_{N} \rVert  \geq \varepsilon/2].
			 	\end{split}
		\end{align}
		By the triangle inequality and a union bound, 
		\begin{equation} 
		\bbP[\lVert \Sigma_{N'}- \Sigma_{N} \rVert  \geq \varepsilon/2]\leq \bbP[\lVert \Sigma_{\infty}- \Sigma_{N} \rVert  \geq \varepsilon/4] + \bbP[\lVert \Sigma_{N'}- \Sigma_{\infty} \rVert  \geq \varepsilon/4]. 
		\end{equation}
		It follows from the assumption that $\Sigma_N\to \Sigma_\infty$ in probability that
		\begin{equation} 
		\lim_{N\to\infty} \lim_{N'\to\infty} \bbP[\lVert \Sigma_{N'}- \Sigma_{N} \rVert  \geq \varepsilon/2] = 0.
		\end{equation} 
		Consequently, the right-hand side and thus left-hand side of \cref{eq:nca} are zero. 
		The event on the left-hand side of \cref{eq:nca} is the event that the sequence $\{\Sigma_N\}_{N=0}^\infty$ fails to be Cauchy, so the preceding argument shows that $\{\Sigma_N(\omega)\}_{N=0}^\infty$ is Cauchy for $\bbP$-almost all $\omega\in \Omega$. 
		
		\item By \Cref{prop:equid}, $\Sigma_\infty$ and $\Sigma_\infty - 2 \Sigma_N$ are equidistributed, for each $N\in \bbN$. 
		For any $\varepsilon>0$, by the (automatic) tightness of $\Sigma_\infty$ there is a norm-compact subset $K\subseteq\scrX$ such that $\bbP[\Sigma_\infty \notin K] < \varepsilon$. Let $L=(1/2)(K-K)$, which is also compact. Then, by a union bound,
		\begin{equation}
			\bbP[\Sigma_N \notin L] \leq \bbP[\Sigma_\infty \notin K] + \bbP[\Sigma_\infty - 2\Sigma_N \notin K] = 2 \bbP[\Sigma_\infty \notin K] < 2 \varepsilon. 
		\end{equation}
		We conclude that $\{\Sigma_N\}_{N=0}^\infty$ is uniformly tight.
		
		Also, since $\Sigma_\infty$ is tight, the family $\calX=\{\Sigma_N\}_{N=0}^\infty \cup \{\Sigma_\infty\}$ is uniformly tight, which implies that the family $\{\Sigma_\infty - \Sigma_N\}_{N=0}^\infty \subseteq\calX-\calX $ is uniformly tight. Consequently, there exists for each $\varepsilon>0$ a norm-compact subset $K_0=K_0(\varepsilon)\subseteq\scrX$ such that 
		\begin{equation} 
			\bbP[(\Sigma_\infty - \Sigma_N) \notin K_0(\varepsilon)] \leq \varepsilon 
			\label{eq:k0d}
		\end{equation} 
		for all $N\in \bbN$. 
		\item Suppose that $\Sigma_N$ does not converge to $\Sigma_\infty$ in probability, so that there exist some $\varepsilon,\delta>0$ and some subsequence $\{\Sigma_{N_k}\}_{k=0}^\infty \subseteq\{\Sigma_N\}_{N=0}^\infty$ such that 
		\begin{equation}
			\bbP[\lVert \Sigma_\infty - \Sigma_{N_k} \rVert > \varepsilon] \geq \delta
			\label{eq:k39} 
		\end{equation}
	for all $k\in \bbN$. Consider the set $K_0=K_0(\delta/2)$ defined in \cref{eq:k0d}, so that $\bbP[(\Sigma_\infty - \Sigma_N) \notin K_0] \leq \delta/2$ for all $N\in \bbN$. Then, combining this inequality with the inequality \cref{eq:k39}, $\bbP[ (\Sigma_\infty - \Sigma_{N_k}) \in K_0\backslash \varepsilon \bbB] \geq \delta/2$ for all $k\in \bbN$. It follows that the quantity 
	\begin{align}
		\bbP[ (\Sigma_\infty - \Sigma_{N_k}) \in K_0 \backslash \varepsilon \bbB\text{ i.o.}] &= \bbP[ \cap_{K\in \bbN} \cup_{k\geq K}(\Sigma_\infty - \Sigma_{N_k}) \in K_0\backslash \varepsilon \bbB] \\
		&= \lim_{K\to\infty} \bbP[  \cup_{k\geq K}(\Sigma_\infty - \Sigma_{N_k}) \in K_0\backslash \varepsilon \bbB] 
	\end{align}
	(where ``i.o.''\ means for infinitely many $k$)
	is bounded below by $\delta/2$ and is in particular positive. So, for $\omega$ in some set of positive probability, there exists an $\omega$-dependent subsequence $\{N'_\kappa(\omega)\}_{\kappa=0}^\infty = \{N_{k_\kappa}(\omega)\}_{\kappa=0}^\infty$ such that $\Sigma_\infty(\omega) - \Sigma_{N'_\kappa}(\omega) \in K_0\backslash \varepsilon \bbB$ for all $\kappa \in \bbN$. 
	
	Since $K_0$ is a compact subset of a metric space, it is sequentially compact, so by passing to a further subsequence we can assume without loss of generality that $\Sigma_\infty(\omega) - \Sigma_{N'_\kappa}(\omega)$ converges strongly to some $\omega$-dependent $\Delta(\omega) \in \scrX$, for $\omega$ in some subset of positive probability. 
	But, for such $\omega$, $\lVert \Delta(\omega)\rVert\geq \varepsilon$ necessarily, so  $\Delta(\omega) \neq 0$. 
	Since $\tau$ is weaker than or identical to the strong topology, 
	\begin{equation} 
		(\Sigma_\infty(\omega) - \Sigma_{N_\kappa'}(\omega)) \to \Delta(\omega) \neq 0
	\end{equation} 
	in $\scrX_\tau$ for such $\omega$. Since $\tau$ is Hausdorff, $\Sigma_N(\omega)$ does not $\tau$-converge to $\Sigma_\infty(\omega)$ as $N\to\infty$. 
	\end{enumerate}
We conclude that (\ref{eq:cvs}) holds for $\bbP$-almost all $\omega\in\Omega$ under the hypotheses of the proposition. 
\end{proof}

It is clear that which of the cases in \Cref{thm:IN} hold depends only on $\{x_n\}_{n=0}^\infty$ and the laws of the random variables $\gamma_0,\gamma_1,\gamma_2,\cdots$.

\section{Proof of Orlicz--Pettis}
\label{sec:main}

Let $\scrX$ be a separable Banach space over $\bbK\in \{\bbR,\bbC\}$, and let $\tau$ be an admissible topology on it.

\begin{proposition} \label{prop:POP}
	Suppose that $\zeta_0,\zeta_1,\zeta_2,\cdots:\Omega\to \bbK$ are a collection of symmetric, independent $\bbK$-valued random variables such that, for some infinite $\calT\subseteq\bbN$,
	\begin{equation}
	\bbP[\exists \varepsilon>0 \text{ s.t. }|\zeta_n| >\varepsilon  \text{ for infinitely many }n\in \calT] = 1.
	\label{eq:misc_gg3}
	\end{equation}
	Suppose further that $\{X_n\}_{n=0}^\infty \in \scrX^\bbN$ is some sequence satisfying
	\begin{equation} 
	\inf_{n\in \calT} \lVert X_n \rVert>0.
	\label{eq:misc_h41}
	\end{equation}
	Then, for any $\calT_0\subseteq\bbN$ such that $\calT_0\supseteq \calT$, it is the case that, for $\bbP$-almost all $\omega \in \Omega$, the sequence $\{\Sigma_N(\omega)\}_{N=0}^\infty$ given by 
	\begin{equation}
	\Sigma_N(\omega) = \sum_{n=0,n\in \calT_0}^N \zeta_n(\omega) X_n
	\label{eq:Sn3}
	\end{equation}
	fails to $\tau$-converge as $N\to\infty$. Therefore, the random formal series $\Sigma : \Omega\to \scrX^\bbN$ defined by $\Sigma(\omega)=\sum_{n=0}^\infty 1_{n\in \calT_0} \zeta_n(\omega)X_n$ satisfies $\Sigma(\omega) \not\in \mathtt{P}_{\mathrm{II}}$ for $\bbP$-almost all $\omega\in \Omega$.
\end{proposition}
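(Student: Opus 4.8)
The plan is to show that the partial sums diverge in norm almost surely, to feed this into the It\^o--Nisio theorem so as to rule out $\tau$-convergence with probability one, and then to sharpen ``probability less than one'' into ``probability zero'' by a Kolmogorov zero-one argument carried out on a measurable surrogate for the (possibly non-Borel) set $\mathtt{P}_{\mathrm{II}}$.

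First I would check that $\{\Sigma_N(\omega)\}_{N=0}^\infty$ fails to be norm-Cauchy for $\bbP$-almost all $\omega$, so that $\bbP[\Sigma \in \mathtt{P}_{\mathrm{I}}] = 0$. Set $c = \inf_{n\in\calT}\lVert X_n\rVert$, which is positive by \cref{eq:misc_h41}. Whenever $n\in\calT\subseteq\calT_0$ the increment $\Sigma_n(\omega)-\Sigma_{n-1}(\omega)$ equals $\zeta_n(\omega)X_n$, of norm $|\zeta_n(\omega)|\,\lVert X_n\rVert \geq c\,|\zeta_n(\omega)|$. By \cref{eq:misc_gg3}, with probability one there is some $\varepsilon(\omega)>0$ with $|\zeta_n(\omega)|>\varepsilon(\omega)$ for infinitely many $n\in\calT$; for each such $n$ the increment has norm at least $c\,\varepsilon(\omega)$. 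Thus the general term does not tend to $0$, so $\{\Sigma_N(\omega)\}$ is not Cauchy and hence not norm-convergent.

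Next I would apply It\^o--Nisio. Because the $\zeta_n$ are symmetric and independent, \Cref{prop:IN} applies verbatim with $\gamma_n = \zeta_n$ and fixed vectors $1_{n\in\calT_0}X_n$: if the partial sums $\tau$-converge for $\bbP$-almost all $\omega$, then they converge in norm for $\bbP$-almost all $\omega$. The previous paragraph excludes the latter, so almost-sure $\tau$-convergence is impossible. To turn this into the full conclusion I would pass to a measurable surrogate. By \Cref{rem:count} choose a countable norming set $\calS\subseteq\scrX^*_\tau$; the topology $\sigma(\scrX,\calS)$ is admissible and weaker than $\tau$, so $\tau$-summability implies $\calS$-weak summability and $\mathtt{P}_{\mathrm{II}}\subseteq\mathtt{P}_{\mathrm{II}'}(\calS)$. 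Repeating the It\^o--Nisio step with the admissible topology $\sigma(\scrX,\calS)$ in place of $\tau$ gives $\bbP[\Sigma\in\mathtt{P}_{\mathrm{II}'}]\neq 1$. Now $\mathtt{P}_{\mathrm{II}'}$ is Borel by \Cref{lem:meas} and lies in the tail $\sigma$-algebra by \Cref{lem:01}, and the $\zeta_n$ are independent, so the zero-one law of \Cref{prop:kol} forces $\bbP[\Sigma\in\mathtt{P}_{\mathrm{II}'}]=0$. Since $\{\Sigma\in\mathtt{P}_{\mathrm{II}}\}\subseteq\{\Sigma\in\mathtt{P}_{\mathrm{II}'}\}$ is then contained in a $\bbP$-null set, we obtain $\Sigma(\omega)\notin\mathtt{P}_{\mathrm{II}}$ for $\bbP$-almost all $\omega$, that is, $\{\Sigma_N(\omega)\}$ fails to $\tau$-converge almost surely.

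The expected main obstacle is precisely this last measurability bookkeeping. Since $\scrX^*_\tau$ need not be second countable, $\mathtt{P}_{\mathrm{II}}$ is not known to be Borel, so the zero-one law cannot be invoked for it directly; the argument succeeds only because $\mathtt{P}_{\mathrm{II}}$ can be squeezed inside the tail-measurable set $\mathtt{P}_{\mathrm{II}'}$ attached to a countable norming family, to which both \Cref{prop:IN} and \Cref{prop:kol} apply. The norm-divergence and the It\^o--Nisio input are routine once this is arranged.
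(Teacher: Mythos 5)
Your proposal is correct and follows essentially the same route as the paper's proof: almost-sure failure of norm convergence from \cref{eq:misc_gg3} and \cref{eq:misc_h41}, the It\^o--Nisio theorem (\Cref{prop:IN}) applied with the admissible topology $\sigma(\scrX,\calS)$ for a countable norming set $\calS$, and the Kolmogorov zero-one law (\Cref{prop:kol}) on the Borel, tail-measurable set $\mathtt{P}_{\mathrm{II}'}$ together with the inclusion $\mathtt{P}_{\mathrm{II}}\subseteq\mathtt{P}_{\mathrm{II}'}$. The only difference is presentational (the paper reduces via the zero-one law first and then derives the contradiction, while you establish norm divergence first), and your identification of the measurability of $\mathtt{P}_{\mathrm{II}}$ as the key subtlety is exactly the point the paper's proof is organized around.
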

\begin{proof}
	By \Cref{prop:kol} and the inclusion $\mathtt{P}_{\mathrm{II'}} \supset \mathtt{P}_{\mathrm{II}}$ (where $\mathtt{P}_{\mathrm{II'}}$ is as in \S\ref{sec:preliminaries}), it suffices to prove that it is not the case that $\Sigma(\omega) = \sum_{n=0}^\infty 1_{n\in \calT_0} \zeta_n(\omega)X_n$ is $\bbP$-almost surely $\calS$-weakly summable, where $\calS\subseteq\scrX_\tau^*$ is a countable collection of norming functionals. 
	Suppose, to the contrary, that $\Sigma$ were almost surely $\calS$-weakly summable. By the It\^o--Nisio theorem, this would imply that $\{\Sigma_N(\omega)\}_{N=0}^\infty$ converges strongly for $\bbP$-almost all $\omega\in \Omega$. But, the conjunction of \cref{eq:misc_gg3} and $\inf_{n\in \calT} \lVert X_n \rVert>0$ implies instead that $\{\Sigma_N(\omega)\}_{N=0}^\infty$ almost surely \emph{fails} to converge strongly. 
\end{proof}

\begin{proposition}
	Let $f:\bbN\to \bbN$. If it is the case that 
	\begin{equation}
	\tau\!-\!\!\lim_{N\to\infty}\sum_{n=0}^N \epsilon_{f(n)}(\omega)x_n 
	\label{eq:misc_h44}
	\end{equation}
	exists for $\bbP$-almost all $\omega\in \Omega$, then, for any subset $\calT\subseteq\bbN$,  
	\begin{equation} 
	\tau\!-\!\!\lim_{N\to\infty}\sum_{n=0, f(n)\in \calT}^N \epsilon_{f(n)}(\omega)x_n 
	\label{eq:misc_tt1}
	\end{equation}
	exists for $\bbP$-almost all $\omega\in \Omega$.
\end{proposition}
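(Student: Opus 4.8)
The plan is to exhibit the restricted partial sums as a difference of two sign-randomizations of the full series and then use that $\tau$ is a vector topology. Decompose $\sum_{n=0}^N \epsilon_{f(n)}(\omega)x_n = A_N(\omega) + B_N(\omega)$, where $A_N(\omega) = \sum_{n=0,f(n)\in\calT}^N \epsilon_{f(n)}(\omega)x_n$ is exactly the partial sum whose $\tau$-convergence is to be shown and $B_N(\omega)=\sum_{n=0,f(n)\notin\calT}^N \epsilon_{f(n)}(\omega)x_n$ is its complement. The crucial feature is that $A$ is built from $\{\epsilon_m:m\in\calT\}$ and $B$ from the complementary, independent family $\{\epsilon_m:m\notin\calT\}$. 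Put $\delta_m=-\epsilon_m$ for $m\in\calT$ and $\delta_m=\epsilon_m$ otherwise; since the $\epsilon_m$ are i.i.d.\ Rademacher, so are the $\delta_m$, and the two families have the same law on the Cantor group. Hence the random formal series $\Sigma'(\omega)=\sum_{n=0}^\infty \delta_{f(n)}(\omega)x_n = -A(\omega)+B(\omega)$ is equidistributed with $\Sigma(\omega)=\sum_{n=0}^\infty \epsilon_{f(n)}(\omega)x_n$ as an $\scrX^\bbN$-valued random element, both being the image of the respective sign family under the same fixed Borel map $\{-1,+1\}^\bbN\to\scrX^\bbN$.

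First I would transfer the hypothesis from $\Sigma$ to $\Sigma'$: since $\Sigma$ and $\Sigma'$ are equidistributed and $\Sigma\in\mathtt{P}_{\mathrm{II}}$ for $\bbP$-almost all $\omega$, the same should hold for $\Sigma'$. This is exactly the equidistribution transfer carried out in \Cref{prop:eq} for the subspace $\mathtt{P}=\mathtt{P}_{\mathrm{II}}\subseteq\scrX^\bbN$. Granting it, I would choose $F\in\calF$ with $\bbP(F)=1$ on which both $\Sigma_N$ and $\Sigma_N'$ $\tau$-converge. Because $\tau$ is an LCTVS-topology, subtraction is $\tau$-continuous, so on $F$ the sequence $2A_N=\Sigma_N-\Sigma_N'$ $\tau$-converges, and therefore so does $A_N$. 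As $A_N$ is precisely $\sum_{n=0,f(n)\in\calT}^N \epsilon_{f(n)}x_n$, this establishes the claim.

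The main obstacle is the legitimacy of the transfer step, since $\mathtt{P}_{\mathrm{II}}$ need not be Borel and ``$\Sigma\in\mathtt{P}_{\mathrm{II}}$ almost surely'' is a priori an inner-probability statement not obviously determined by the law of $\Sigma$ alone. The cleanest way to secure it is to condition on $\calH=\sigma(\epsilon_m:m\notin\calT)$: for $\bbP_{\calH}$-almost every outcome the sequence $B_N$ is deterministic and $A_N$ is a function of the independent family $\{\epsilon_m:m\in\calT\}$, whose conditional law is Haar measure on the Cantor factor $\{-1,+1\}^{\calT}$. On that factor the sign flip is the concrete Borel, measure-preserving involution negating every coordinate, which carries inner-probability-one events to inner-probability-one events; so conditionally the convergence of $\Sigma_N=A_N+B_N$ forces that of $-A_N+B_N$, hence of $2A_N$, and Fubini reassembles this into the almost-sure $\tau$-convergence of $A_N$. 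One may alternatively invoke the reductions recorded after \Cref{thm:IN} to pass to the completion of $\bbP$, under which these convergence events are measurable, or replace $\mathtt{P}_{\mathrm{II}}$ by the Borel surrogate $\mathtt{P}_{\mathrm{II}'}$ of \Cref{lem:meas} when only $\calS$-weak convergence of the subseries is needed. I expect this measurability bookkeeping, rather than the elementary difference argument, to be the only genuinely delicate point.
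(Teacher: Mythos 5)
Your proof is correct and essentially identical to the paper's: the paper defines $\epsilon_n'$ by flipping the sign precisely on $\calT$, observes that $\sum_{n}\epsilon'_{f(n)}x_n$ is equidistributed with $\sum_n \epsilon_{f(n)}x_n$ and hence also a.s. $\tau$-summable, and recovers the subseries as (half) the difference of the two series, using that $\tau$ is a vector topology. The measurability caveat you raise (that a.s. membership in a possibly non-Borel set like $\mathtt{P}_{\mathrm{II}}$ is not a priori a law-invariant) is a genuine subtlety, but the paper does not treat it either---it asserts the equidistribution transfer directly, and in the application in \S4 only the Borel surrogate $\mathtt{P}_{\mathrm{II}'}$ is needed, where the transfer is immediate.
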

\begin{proof}
	Let 
	\begin{equation}
	\epsilon_n' = 
	\begin{cases}
	\epsilon_n & (n\notin \calT) \\ 
	-\epsilon_n & (n\in \calT). 
	\end{cases}
	\end{equation}
	We can now consider the random formal series 
	\begin{align}
	\sum_{n=0}^\infty (\epsilon_{f(n)}' - \epsilon_{f(n)}) x_n  &= \sum_{n=0}^\infty \epsilon_{f(n)}' x_n -\sum_{n=0}^\infty \epsilon_{f(n)} x_n  \label{eq:misc_h11}
	\\ &= 2\sum_{n=0,f(n)\in \calT}^\infty \epsilon_{f(n)}x_n. \label{eq:misc_h22}
	\end{align}
	The two random formal series on the right-hand side of \cref{eq:misc_h11} are equidistributed, so, under the hypothesis of the proposition, both are $\tau$-summable for $\bbP$-almost all $\omega\in \Omega$. Thus, the formal series on the right-hand side of \cref{eq:misc_h22} is $\bbP$-almost surely $\tau$-summable. 
\end{proof}

We deduce \Cref{thm:PO} (and thus \Cref{thm:OP}) as a corollary of the previous two propositions. We prove the slightly strengthened claim that, for $\bbP_{\mathrm{Haar}}$-almost all $\{\epsilon_n\}_{n=0}^\infty \in \{-1,+1\}^\bbN$, the formal series in \cref{eq:misc_h31}
both fail to even be $\calS$-weakly summable. By \Cref{prop:eq}, we just need to show that it is \emph{not} the case that, for $\bbP_{\mathrm{Haar}}$-almost all $\{\epsilon_n\}_{n=0}^\infty\in \{-1,+1\}^\bbN$, the formal series 
\begin{equation}
	\sum_{n=0,f(n)\in \calT}^\infty \epsilon_{f(n)} x_n \in \scrX^\bbN 
	\label{eq:misc_g11}
\end{equation}
is $\calS$-weakly summable. Suppose, to the contrary, that it is $\calS$-weakly summable for $\bbP_{\mathrm{Haar}}$-almost all $\{\epsilon_n\}_{n=0}^\infty$. Owing in part to the assumption that $|f^{-1}(\{n\})|<\infty$ for all $n\in \bbN$ (along with \cref{eq:misc_g41}), there exists a $\calT_0\subseteq\calT$ such that 
\begin{itemize}
	\item $f:f^{-1}(\calT_0)\to \bbN$ is monotone and
	\item $	\inf_{n\in \calT_0} \rVert \sum_{ n_0 \in f^{-1}(\{n\}) } x_{n_0} \rVert >0$.
\end{itemize}
By the previous proposition, $\sum_{n=0,f(n)\in \calT_0}^\infty \epsilon_{f(n)} x_n \in \scrX^\bbN$
is $\calS$-weakly summable 
$\bbP$-almost surely. Since $f|_{f^{-1}(\calT_0)}$ is monotone, we deduce that 
\begin{equation}
	\sum_{n=0,n\in \calT_0}^\infty \epsilon_{n} \Big[ \sum_{n_0\in f^{-1}(\{n\})} x_{n_0} \Big] \in \scrX^\bbN 
	\label{eq:misc_g13}
\end{equation}
is $\calS$-weakly summable $\bbP$-almost surely. However, this contradicts \Cref{prop:POP}.

\section*{Acknowledgements}

This work was partially supported by a Hertz fellowship. The author would like to thank the reviewer for their comments.

\appendix

\section{Admissible topologies}
Let $\scrX$ denote a Banach space over $\bbK\in \{\bbR,\bbC\}$, and let $\tau$ be an admissible topology on it. 
\begin{lemma}
	The $\tau$-weak topology, a.k.a.\ the $\sigma(\scrX,\scrX_\tau^*)$-topology, is admissible.
	\label{lem:weak}
\end{lemma}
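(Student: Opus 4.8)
The plan is to verify the two defining conditions of admissibility for the $\sigma(\scrX,\scrX_\tau^*)$-topology directly. First I would observe that $\sigma(\scrX,\scrX_\tau^*)$ is, by construction, the initial topology induced by the seminorms $x\mapsto |\Lambda x|$ for $\Lambda \in \scrX_\tau^*$, so it is automatically a locally convex vector topology. To see that it is Hausdorff, I would use that $\tau$ is itself a Hausdorff locally convex topology, so by the Hahn--Banach theorem the continuous functionals $\scrX_\tau^*$ separate the points of $\scrX$; this is precisely what is needed for $\sigma(\scrX,\scrX_\tau^*)$ to be Hausdorff. Since every $\Lambda\in\scrX_\tau^*$ is $\tau$-continuous by definition, the topology $\sigma(\scrX,\scrX_\tau^*)$ is weaker than or equal to $\tau$, and hence (as $\tau$ is weaker than or equal to the norm topology) weaker than or equal to the norm topology.

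The substantive step is showing that $\bbB$ is $\sigma(\scrX,\scrX_\tau^*)$-closed. Here I would invoke the standard consequence of the Hahn--Banach separation theorem that, in a locally convex space $(\scrX,\tau)$, a convex set is $\tau$-closed if and only if it is closed in the induced weak topology $\sigma(\scrX,\scrX_\tau^*)$. Concretely: $\bbB$ is convex, and by the admissibility of $\tau$ it is $\tau$-closed; given any $x\notin\bbB$, Hahn--Banach separation produces some $\Lambda\in\scrX_\tau^*$ and a real scalar strictly separating $x$ from $\bbB$, which exhibits a $\sigma(\scrX,\scrX_\tau^*)$-open neighborhood of $x$ disjoint from $\bbB$. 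Hence the complement of $\bbB$ is $\sigma(\scrX,\scrX_\tau^*)$-open, i.e.\ $\bbB$ is $\sigma(\scrX,\scrX_\tau^*)$-closed. This is the main point, and the only place where convexity of the unit ball is genuinely used.

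Finally I would dispatch the separability clause. If $\scrX$ is not separable, admissibility of $\tau$ forces $\tau$ to be at least as strong as the weak topology $\sigma(\scrX,\scrX^*)$. Since $\tau$ then lies between the weak topology and the norm topology, its dual satisfies $\scrX^*\subseteq \scrX_\tau^*\subseteq \scrX^*$, whence $\scrX_\tau^*=\scrX^*$: the left inclusion uses that the dual of $\sigma(\scrX,\scrX^*)$ is $\scrX^*$ together with $\tau\supseteq\sigma(\scrX,\scrX^*)$, and the right inclusion uses that $\tau$ is weaker than the norm topology. Consequently $\sigma(\scrX,\scrX_\tau^*)=\sigma(\scrX,\scrX^*)$ is precisely the weak topology, so the second admissibility condition holds trivially. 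The only real obstacle is the closedness of $\bbB$ in the preceding paragraph, and that is handled entirely by Hahn--Banach separation; the remaining verifications are formal.
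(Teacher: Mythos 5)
Your proof is correct and follows essentially the same route as the paper: both verify the LCTVS and comparison-with-norm conditions formally, both establish $\sigma(\scrX,\scrX_\tau^*)$-closedness of $\bbB$ by applying Hahn--Banach separation in $\scrX_\tau$ to a point $x\notin\bbB$ and the $\tau$-closed convex set $\bbB$ (the paper phrases this as an intersection of closed slabs $C_{\Lambda,I}$, you phrase it as openness of the complement, which is the same argument), and both settle the non-separable clause by observing that the dual of the weak topology is $\scrX^*$, so the $\tau$-weak topology contains (in your version, equals) the weak topology.
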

\begin{proof} \hfill 
	\begin{enumerate}
		\item The $\tau$-weak topology is an LCTVS-topology on $\scrX$ \cite[\S3.10, \S3.11]{Rudin} identical to or weaker than the norm topology. 
		
		For each $\Lambda \in \scrX_\tau^*$ and closed interval $I\subseteq[-\infty,+\infty]$, let $C_{\Lambda,I}$ denote the $\tau$-weakly closed subset (I) $C_{\Lambda,I} = \Lambda^{-1}(I)$ if $\bbK=\bbR$ or (II) $C_{\Lambda,I} = \Lambda^{-1}(\{z\in \bbC: \Re z \in I\})$ otherwise. 
		By the Hahn-Banach theorem, $\scrX_\tau^*$ is not empty --- picking any $\Lambda \in \scrX_\tau^*\subseteq\scrX^*$, there exists some closed interval $I$ such that $C_{\Lambda,I}\supseteq \bbB$, so we can form the intersection
		\begin{equation}
			\tilde{\bbB} = \bigcap_{\substack{ \Lambda\in \scrX_\tau^* ,I \subseteq[-\infty,+\infty] \\ C_{\Lambda,I} \supseteq \bbB }} C_{\Lambda,I}. \label{eq:misc_654}
		\end{equation}
		This is a $\tau$-weakly closed set containing $\bbB$. If $x\notin \bbB$, we can apply the Hahn-Banach separation theorem \cite[Thm. 7.8.6]{TVS} to the sets $\{x\}$ and $\bbB$ to get some $\Lambda \in \scrX_\tau^*$ such that $\Re \Lambda x > 1$ and $\Re \Lambda x_0<1$ for all $x_0\in \bbB$. Then, since $\bbB$ is closed under multiplication by $-1$, $\Re \Lambda x_0 \in (-1,+1)$ for all $x_0\in \bbB$, which means that $C_{\Lambda,[-1,+1]}$ appears on the right-hand side of \cref{eq:misc_654}. 
		
		Since $x\notin C_{\Lambda,[-1,+1]}$, we get $x\notin \tilde{\bbB}$. We conclude that $\tilde{\bbB}=\bbB$ and, therefore, that the latter is $\tau$-weakly closed. 
		\item If $\scrX$ is not separable, then $\tau$ is at least as strong as the weak topology. Since the weak topology of the weak topology is just the weak topology \cite[\S3.10, \S3.11]{Rudin} -- that is, $\sigma(\scrX,\scrX_{\mathrm{w}}^*) = \sigma(\scrX,\scrX^*)$, where $\scrX_{\mathrm{w}} = \sigma(\scrX,\scrX^*)$ -- the $\tau$-weak topology is at least as strong as the weak topology.
	\end{enumerate}
	Thus, the $\tau$-weak topology is admissible. 
\end{proof}

\begin{lemma}
	If $\scrX$ is separable, 
	there exists a countable norming subset $\calS\subseteq \scrX_\tau^*$.
	\label{lem:countable_norming}
\end{lemma}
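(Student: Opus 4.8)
The plan is to first show that the norm-unit ball of $\scrX_\tau^*$ is already norming, and then to thin it down to a countable norming family using the separability of $\scrX$. The entire weight of the argument rests on the first step, where admissibility is used; the passage to a countable subfamily is then a routine density argument.

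\textbf{Step 1: $\scrX_\tau^*$ is norming.} I would set $\calS_0 = \{\Lambda \in \scrX_\tau^* : \lVert \Lambda \rVert_{\scrX^*} \leq 1\}$ and claim that $\lVert x \rVert = \sup_{\Lambda \in \calS_0} |\Lambda x|$ for all $x \in \scrX$. The inequality $\leq$ follows from the operator-norm bound $|\Lambda x| \leq \lVert \Lambda \rVert \lVert x \rVert \leq \lVert x \rVert$. For the reverse inequality, the essential point is that the separating functional can be chosen in $\scrX_\tau^*$, not merely in $\scrX^*$, and this is precisely the Hahn--Banach separation argument already carried out in the proof of \Cref{lem:weak}: since $\tau$ is admissible, $\bbB$ is $\tau$-closed and convex, so applying the separation theorem in the LCTVS $\scrX_\tau$ to the disjoint sets $\{x\}$ and $\bbB$ (for any $x$ with $\lVert x \rVert > 1$) produces $\Lambda \in \scrX_\tau^*$ with $\Re \Lambda x > 1$ and $\Re \Lambda x_0 \in (-1,+1)$ for all $x_0 \in \bbB$; since $\bbB$ is balanced, this forces $\lVert \Lambda \rVert \leq 1$. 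Applying this to $x/c$ for an arbitrary $c < \lVert x \rVert$ and rescaling then yields $\sup_{\Lambda \in \calS_0}|\Lambda x| \geq \lVert x \rVert$.

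\textbf{Step 2: extraction of a countable subfamily.} Using separability, I would fix a countable dense subset $\{x_k\}_{k=1}^\infty$ of the unit sphere of $\scrX$. For each $k \in \bbN$ and each $m \in \bbN^+$, Step 1 permits me to choose $\Lambda_{k,m} \in \calS_0$ with $|\Lambda_{k,m} x_k| > 1 - 1/m$. I then set $\calS = \{\Lambda_{k,m}\}_{k,m} \subseteq \scrX_\tau^*$, which is countable.

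\textbf{Step 3: verification.} Finally I would check that $\calS$ is norming. Because every member of $\calS$ has operator norm at most one, $\sup_{\Lambda \in \calS}|\Lambda x| \leq \lVert x \rVert$. For the reverse, by homogeneity it suffices to treat a unit vector $x$; given $\varepsilon > 0$, density supplies some $x_k$ with $\lVert x - x_k \rVert < \varepsilon/2$, and choosing $m$ with $1/m < \varepsilon/2$ gives
\[
|\Lambda_{k,m} x| \geq |\Lambda_{k,m} x_k| - \lVert \Lambda_{k,m} \rVert \, \lVert x - x_k \rVert > \Big(1 - \frac{1}{m}\Big) - \frac{\varepsilon}{2} > 1 - \varepsilon .
\]
Letting $\varepsilon \to 0^+$ yields $\sup_{\Lambda \in \calS}|\Lambda x| = \lVert x \rVert$, so $\calS$ is the desired countable norming subset. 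The only nontrivial ingredient is Step 1, where the $\tau$-closedness of $\bbB$ (i.e.\ admissibility) is exactly what guarantees the separating functionals lie in $\scrX_\tau^*$; Steps 2 and 3 are standard.
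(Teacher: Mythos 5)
Your proof is correct and follows essentially the same route as the paper: both hinge on applying Hahn--Banach separation in $\scrX_\tau$ to a point and a $\tau$-closed scaled ball (exactly where admissibility enters, forcing the separating functional to lie in $\scrX_\tau^*$ with controlled operator norm), and both then use separability to extract a countable family verified by a triangle-inequality density argument. The only difference is organizational: you first isolate the clean intermediate claim that the unit ball of $\scrX_\tau^*$ is norming and then approximate, whereas the paper constructs the functionals directly at the points $x_n$ of a dense set by separating $x_n$ from $R\bbB$ with $R \uparrow \lVert x_n \rVert$ and rescaling to $R\Lambda_{n,R}$ --- the same computation after normalization.
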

\begin{proof}
	Let $\{x_n\}_{n=0}^\infty$ denote a dense subset of $\scrX\backslash \{0\}$. By \cite[Thm. 7.8.6]{TVS}, there exists for each $n\in \bbN$ and each $R \in (0,\lVert x_n \rVert )$ an element $\Lambda_{n,R} \in \scrX_\tau^*$ such that $\Re \Lambda_{n,R} x_n >1 $ and $\Re \Lambda_{n,R}<1$ on the closed ball $ R\bbB$ (which is $\tau$-closed by admissibility). Since $R\bbB$ is closed under multiplication by phases, 
	\begin{equation}
		\lVert \Lambda_{n,R} x \rVert < 1
	\end{equation}
	for all $x\in R \bbB$. Thus, $\lVert \Lambda_{n,R} \rVert_{\scrX^*} \leq 1/R$. It follows that $1< \Re \Lambda_{n,R} x_n < | \Lambda_{n,R} x_n| \leq \lVert x_n \rVert/R$, so $\lim_{R\uparrow \lVert x_n \rVert} |\Lambda_{n,R}x_n| = 1$. 
	
	Now let $\calS$ be the set of all functionals of the form $R\Lambda_{n,R}$ for $R$ of the form $\lVert x_n \rVert-1/m$ for $m\in \bbN^+$ sufficiently large such that $1/m < \lVert x_n \rVert$. Then, it is straightforward to check that $\calS$ is a norming subset, and $\calS$ is countable. 
\end{proof}
Cf.\ \cite[Lemma 6.7]{Carothers}. 

\begin{lemma}
	If $\scrX$ is separable and $\calS\subseteq \scrX_\tau^*$ is a norming subset, then the $\sigma(\scrX,\calS)$-topology is admissible. 
	\label{lem:S_admissible}
\end{lemma}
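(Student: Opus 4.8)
The plan is to verify the defining clauses of admissibility directly. Since $\scrX$ is assumed separable, condition \ref{it:admissible_two} of the definition is vacuous, so it suffices to check condition \ref{it:admissible_main}: that $\sigma(\scrX,\calS)$ is a Hausdorff LCTVS topology, no stronger than the norm topology, under which $\bbB$ is closed.

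First I would observe that $\sigma(\scrX,\calS)$ is by construction the initial topology induced by the family $\calS \subseteq \scrX_\tau^* \subseteq \scrX^*$ of norm-continuous linear functionals; hence it is a locally convex vector topology, generated by the seminorms $x \mapsto |\Lambda x|$ for $\Lambda \in \calS$. It is Hausdorff precisely because $\calS$ separates points: if $x \neq 0$ then $\lVert x \rVert = \sup_{\Lambda \in \calS} |\Lambda x| > 0$, so some $\Lambda \in \calS$ has $\Lambda x \neq 0$. Moreover, since every generating functional is norm-continuous, the generated topology is weaker than or identical to the norm topology.

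The crux is showing that $\bbB$ is $\sigma(\scrX,\calS)$-closed, and here the norming hypothesis does all the work. From $\lVert x \rVert = \sup_{\Lambda \in \calS}|\Lambda x|$ one gets immediately that $\lVert x \rVert \leq 1$ if and only if $|\Lambda x| \leq 1$ for every $\Lambda \in \calS$, that is,
\begin{equation}
	\bbB = \bigcap_{\Lambda \in \calS} \Lambda^{-1}(\{z \in \bbK : |z| \leq 1\}).
\end{equation}
Each $\Lambda \in \calS$ is by definition $\sigma(\scrX,\calS)$-continuous, and $\{z \in \bbK : |z| \leq 1\}$ is closed in $\bbK$, so each set in the intersection is $\sigma(\scrX,\calS)$-closed; an arbitrary intersection of closed sets is closed, whence $\bbB$ is $\sigma(\scrX,\calS)$-closed. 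Assembling the verified properties then yields admissibility.

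There is no substantial obstacle here. The only point requiring any care is the elementary but essential observation that the norming identity expresses $\bbB$ \emph{exactly} as the displayed intersection (rather than merely containing or being contained in it), which is what makes the closed-unit-ball requirement fall out for free; everything else is a routine unwinding of the definition of the initial topology.
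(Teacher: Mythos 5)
Your proof is correct, and it is tidier than the paper's own argument. The paper proves closedness of $\bbB$ by mimicking its proof of \Cref{lem:weak}: it first assumes (without loss of generality, when $\bbK=\bbC$) that $\calS$ is closed under multiplication by phases $e^{i\theta}$, then forms an auxiliary set $\tilde{\bbB}$ as the intersection of all sets of the form $C_{\Lambda,I}=\Lambda^{-1}(\{z:\Re z\in I\})$ containing $\bbB$, and finally shows $\tilde{\bbB}=\bbB$ by arguing that any $x\notin\bbB$ is excluded by some $C_{\Lambda,[-1,+1]}$ --- the phase-closure assumption being needed precisely because the argument works with $\Re\Lambda x$ rather than $|\Lambda x|$. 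You bypass all of this by using the norming identity to write, exactly,
\begin{equation}
	\bbB = \bigcap_{\Lambda\in\calS}\Lambda^{-1}(\{z\in\bbK:|z|\leq 1\}),
\end{equation}
which is an intersection of $\sigma(\scrX,\calS)$-closed sets. Working with moduli instead of real parts is what lets you drop both the WLOG reduction and the two-step $\tilde{\bbB}\supseteq\bbB$, $\tilde{\bbB}\subseteq\bbB$ structure; the separation-style machinery in the paper is genuinely needed in \Cref{lem:weak} (where the functionals must be \emph{produced} by Hahn--Banach), but here the norming set is handed to you, so the direct identity suffices. Your verification of the remaining clauses (local convexity and Hausdorffness from the seminorms $x\mapsto|\Lambda x|$, with separation of points following from the norming identity, and weakness relative to the norm topology from norm-continuity of each $\Lambda$) matches the paper's citation of \cite[Thm.\ 3.10]{Rudin}, and you correctly note that the second clause of admissibility is vacuous under the separability hypothesis.
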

\begin{proof}
	We can assume without loss of generality that, if $\bbK=\bbC$, $e^{i\theta} \Lambda \in \calS$ whenever $\Lambda\in \calS$ and $\theta \in \bbR$. 
	By \cite[Thm.\ 3.10]{Rudin}, the $\sigma(\scrX,\calS)$-topology is an LCTVS topology, and it is no stronger than the norm topology. Consider 
	\begin{equation}
		\tilde{\bbB} = \bigcap_{\substack{ \Lambda\in \calS ,I \subseteq[-\infty,+\infty] \\ C_{\Lambda,I} \supseteq \bbB }} C_{\Lambda,I}, 
		\label{eq:misc_657}
	\end{equation}
	which is a $\sigma(\scrX,\calS)$-closed set containing $\bbB$. If $x\notin \bbB$, then there exists some $\Lambda\in \calS$ such that $|\Re \Lambda x | \in (1,\lVert x_n \rVert]$. Since $\calS$ is norming, $\lVert \Lambda \rVert_{\scrX^*} \leq 1$, so  $C_{\Lambda,[-1,+1]}$ appears on the right-hand side of \cref{eq:misc_657}. But, 
	\begin{equation} 
		x\notin C_{\Lambda,[-1,+1]},
	\end{equation} 
	so $x\notin \tilde{\bbB}$. 
	
	We conclude that $\tilde{\bbB}=\bbB$, so $\bbB$ is $\sigma(\scrX,\calS)$-closed. 
\end{proof}

\printbibliography
 
\end{document}